\documentclass[12pt]{article}
\usepackage{diagbox}
\usepackage{mathtools}
\usepackage{bbm}
\usepackage{latexsym}
\usepackage{epsfig}
\usepackage{amsmath,amsthm,amssymb,enumerate}

\usepackage[a-1b]{pdfx}
\usepackage{hyperref}

\usepackage{color}

\def\nn{\nonumber}
\def\a{\alpha}  \def\d{\delta} 
\def\e{\varepsilon} \def\f{\phi} \def\F{{\Phi}}  \def\g{\gamma}
\def\G{\Gamma}  \def\k{\kappa}
\def\z{\zeta} \def\th{\theta}    \def\l{\lambda}
 \def\m{\mu}  \def\p{\pi}
\def\r{\rho}  \def\s{\sigma} 
\def\t{\tau}

\def\bx{{\bf x}}

\def\bal{\boldsymbol \alpha}
\newtheorem{theorem}{Theorem}
\newtheorem{lemma}[theorem]{Lemma}

\newtheorem{claim}{Claim}

\newcommand{\wh}[1]{\widehat{#1}}

\newcommand{\brac}[1]{\left(#1\right)}

\newcommand{\bfrac}[2]{\left(\frac{#1}{#2}\right)}

\def\cE{{\cal E}}

\newcommand{\set}[1]{\left\{#1\right\}}
\def\sm{\setminus}

\def\dd{\text{d}}
\def\E{\mathbb{E}}

\def\Pr{\mathbb{P}}

\def\cF{{\cal F}}
\newcommand{\ignore}[1]{}

\def\cE{{\mathcal E}}
\def\cF{{\mathcal F}}

\def\cM{{\mathcal M}}

\def\cY{{\mathcal Y}}

\newcommand{\beq}[2]{\begin{equation}\label{#1}#2\end{equation}}
\newcommand{\mults}[1]{\begin{multline*}#1\end{multline*}}

\def\nn{\nonumber}

\usepackage{tikz}
\usetikzlibrary{decorations.pathmorphing}
\usetikzlibrary{positioning}
\usetikzlibrary{arrows,automata}
\usetikzlibrary{shapes.misc}
\usetikzlibrary{backgrounds}
\usetikzlibrary{arrows,shapes}

\begin{document}
\author{Patrick Bennett\thanks{Department of Mathematics, Western Michigan University, Kalamazoo MI 49008-5248, Research supported in part by Simons Foundation Grant \#848648. Email: patrick.bennett@wmich.edu.}\and Alan Frieze\thanks{Department of Mathematical Sciences, Carnegie Mellon University, Pittsburgh PA15213, Research supported in part by NSF grant DMS2341774. Email: frieze@cmu.edu.}\and Wesley Pegden\thanks{Research supported in part by NSF grant DMS-1363136. Email: wes@math.cmu.edu.}}
\title{Expected cost in Combinatorial Optimization under color constraints}
\maketitle
\begin{abstract}
We present an average case model of classical problems in combinatorial optimization where there are color constraints. In all cases we seek some (spanning) sub-structure of a complete graph of minimum cost. The edges are randomly colored either red or blue. We bias against the red edges by placing a bound on the number of them that are allowed in our structure. This bound will be lower w.h.p. than what would occur without discrimination. We examine the effect of this bias on the minimum cost of a desired structure. We consider minimum cost spanning trees, shortest paths, minimum cost perfect matchings and the asymmetric traveling salesperson problem.
\end{abstract}
\section{Introduction}
We present an average case model of classical problems in combinatorial optimization where there are color constraints. In all cases we seek some (spanning) sub-structure of a complete graph of minimum cost. The edges are colored either red or blue. We bias against the red edges by placing a bound on the number of them that are allowed in our structure. This bound will be lower than what would occur with high probability without discrimination. We examine the effect of this bias on the minimum cost of a desired structure.

The analysis of randomly colored random graphs is a thriving area, see Chapter 15 of \cite{FK} for an overview of some of the best studied questions. In this paper we deal with random colorings within the context of optimization.

In this paper we use standard asymptotic notation, and asymptotics are as $n \rightarrow \infty$. We write $a_n \sim b_n$ if $a_n = (1+o(1))b_n$. We say a sequence of events $\cE_n$ occurs {\em with high probability (w.h.p.)} if $\Pr[\cE_n] \rightarrow 1$. 
\subsection{Spanning Trees}
First consider the complete graph $K_n$ where the edges are colored  either red or blue independently with probability 1/2. Each edge $e\in E(K_n)$ is given an independent uniform $[0,1]$ cost $X_e$. We let $E_R$ denote the set of red edges. We seek a minimum cost spanning tree that contains at most $\a n$ red edges, where $\a<1/2$. (We will assume that $\a n$ is an integer.) The cost $C(T)$ of a tree $T$ is given by $\sum_{e\in T}X_e$. Note that when $\a>1/2$ this restriction will be redundant w.h.p. Indeed, if we ignore the color constraint and just find the minimum cost spanning tree, w.h.p. it has at most $\a n$ red edges. Frieze \cite{Frieze85} proved that with no color constraint, the minimum cost spanning tree has cost $\z(3) = \sum_{k=1}^\infty k^{-3}$ w.h.p. In this context we prove 
\begin{theorem}\label{th1}
  Let $Z_{\a}=Z_\a(MST)$ denote the minimum cost of a spanning tree $T$ with $|E_R\cap T|\leq \a n$. If $\frac 1 4<\alpha< \frac 1 2$, we have
\[
\E Z_{\a}\sim \z(3)+(1-2\a)^2.
\]

More generally, let
\beq{eqn:fdef}
{
f(x)=\frac{1}{x}\sum_{k=1}^\infty\frac{k^{k-2}}{k!}(xe^{-x})^k=\begin{cases}1-\frac{x}{2}&x\le 1.\\ \frac{y}{x}\brac{1-\frac{y}{2}}&x>1.\end{cases}
}
Here, $y<1$ satisfies $ye^{-y}=xe^{-x}$.

If $\psi$ is the solution to 
\[
f(\psi)=2\a. 
\]
then for any $0<\alpha<\frac 1 2$, we have
\beq{parisi}{
\E Z_{\a}\sim \z(3)+\int_{x=0}^\psi f(x)\dd x-2\a\psi.
}
Furthermore, $Z_\a\sim\E Z_\a$ w.h.p.
\end{theorem}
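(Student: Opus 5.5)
We propose to use Lagrangian relaxation combined with matroid exchange. For a penalty $\l\ge0$, add $\l$ to the cost of every red edge and let $T_\l$ be the ordinary minimum spanning tree under the costs $X_e+\l 1_{e\in E_R}$. Matroid theory (the spanning trees of $K_n$ form the bases of a graphic matroid, and the color constraint is a single linear side constraint) gives
\[
Z_\a=\max_{\l\ge0}\brac{C_\l(T_\l)-\l\a n}.
\]
Here $C_\l$ is the penalized cost. Moreover, the number of red edges in $T_\l$ is nonincreasing in $\l$ and changes by at most one at each breakpoint, so there is a $\l^*$ at which some minimizer has exactly $\a n$ red edges. That tree is then optimal for the constrained problem. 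So it suffices to compute, w.h.p., the cost $L(\l)=C_\l(T_\l)$ and the red count $R(\l)=|T_\l\cap E_R|$ to leading order, and then optimize over $\l$.

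Under penalty $\l$, the blue edges have uniform costs on $[0,1]$ and the red edges have uniform costs on $[\l,1+\l]$. Rescale by $p=x/n$ and use the standard formula (Frieze's argument, via Kruskal's algorithm):
\[
L(\l)=\int_0^\infty\brac{\kappa(G_t)-1}\,dt,
\]
where $G_t$ is the graph of edges with penalized cost at most $t$. For $t\le\l$, $G_t$ is a random graph with edge probability $t/2$ (blue edges only); for $t>\l$ the edge probability is $t-\l/2$. Write $\l=\mu/n$. The number of components of $G_{n,c/n}$ is w.h.p.\ $n\,f(c)c/c\cdot$ in the form $n\sum_k \frac{k^{k-2}}{k!}c^{k-1}e^{-ck}$, i.e.\ $n\,f(c)$ up to the normalization in \eqref{eqn:fdef}. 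This should give
\[
L\approx \int_0^{\mu}f(x/2)\,dx+\int_{\mu}^\infty f(x-\mu/2)\,dx .
\]
After substitution, this becomes $\z(3)$ plus a correction: using $\int_0^\infty f=\z(3)$, the correction is $\int_0^{\psi}f(x)\,dx$ with $\psi=\mu/2$, up to bookkeeping of the constants. We will verify the constants so that the dual objective becomes $\z(3)+\int_0^\psi f-2\a\psi$.

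Differentiating in $\psi$ gives $f(\psi)=2\a$. Since $f$ decreases from $1$, a solution exists exactly when $\a<1/2$, which matches the statement. By the envelope theorem, $R(\l)/n$ is the derivative of $L$ in $\mu$, which equals $f(\psi)/2$ (the density of red edges added at the time of penalty). This confirms that the right $\l^*$ yields $\a n$ red edges. For $\a>1/4$ we have $\psi<1$, so $f(x)=1-x/2$, $\psi=2(1-2\a)$, and the integral minus $2\a\psi$ equals $\psi-\psi^2/4-2\a\psi=(1-2\a)^2$, giving the first formula.

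The main obstacle is rigor in passing from the limiting component counts to $\E Z_\a$. The counts $\kappa(G_t)/n$ concentrate uniformly in $t$, but we also need $L(\l)$ to converge for the specific data-dependent $\l^*$, and the large-$t$ tail must be controlled. We would handle this as follows. Prove concentration of $L(\l)$ for each fixed $\mu$ via Talagrand's inequality or bounded differences applied to a truncated version (edges of cost at most $C\log n/n$ suffice w.h.p.). Then use the concavity of $\mu\mapsto L-\mu\a$ to get uniform convergence on compact sets and convergence of the maximizer. The concentration of $Z_\a$ follows from the same max formula, and the expectation follows because $Z_\a$ is bounded by, for example, the MST cost on blue edges, which has bounded moments.
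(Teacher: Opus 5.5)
Your proposal is correct and follows essentially the same route as the paper. Both arguments use Lagrangian relaxation of the red-edge bound, the Kruskal/Janson identity $L=\int_0^\infty(\kappa(G_t)-1)\,dt$ with $\E\kappa(G_{n,c/n})=nf(c)+o(n)$, the substitution that leaves $\zeta(3)+\int_0^{\psi}f$ with $\psi=n\lambda/2$, optimization giving $f(\psi)=2\alpha$, and Talagrand concentration of a truncated version.

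The differences are minor: you get strong duality from the single-exchange monotonicity of the red count rather than from Edmonds' theorem plus Geoffrion, and uniformity in $\lambda$ from concavity rather than from a Lipschitz grid. For rigor, do the substitution at finite $n$ in expectation, as the paper does, so that the $\lambda$-dependent part lives on an interval of length $O(1/n)$ and the remainder is Frieze's $\zeta(3)$. This avoids integrating an $o(n)$ error over a range of length $\Theta(\log n/n)$.
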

The function $f(x)$ is asymptotically equal to $n^{-1}$ times the expected number of components in the random graph $G=G_{n,x/n}$. If $x<1$ then w.h.p. $G$ is a forest plus a few unicyclic components and so there are close to $n-|E(G)|$ components, where $|E(G)|\sim nx/2$ w.h.p. We do not prove what happens when $\a=1/2$, but letting $\a \nearrow 1/2$ into the above formula gives $\psi \rightarrow 0$ and $Z_\a\sim \z(3)$, which corresponds to the result of \cite{Frieze85}.

\subsection{Paths}
For this problem each edge $e$ of the complete graph (or digraph) is given a cost $X_e$ independently distributed as exponential mean 1, $EXP(1)$, i.e. $\Pr(X_e\geq x)=e^{-x}$ for $x\geq 0$. Edges are then independently colored red or blue with probability 1/2. Now for $0<\a < 1/2$ we compute minimum cost paths under the restriction that 
\beq{pathcolors}{
\text{no path can contain more than $L=\a\log n$ red edges.}
}
 (We assume that $\a\log n$ is an integer.) So now let $Z_\a=Z_{\a}(SP)$ denote the cost of the minimum cost path from 1 to 2, i.e.~two arbitrary fixed vertices. Then  
\begin{theorem}\label{th2}
\beq{RHS}{
Z_\a\sim\frac{\l\log n}{n} w.h.p.
}
where $0<\a<1/2$ and $\l=\l(\a)$ is the unique solution to
\[
 \frac{\lambda}{2}+\a\log\!\left(\frac{\lambda}{2\a}\right)+\a-1=0.
\]
\end{theorem}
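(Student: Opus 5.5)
The plan is a first-moment computation for the lower bound, followed by a matching construction for the upper bound; both rest on one large-deviation optimisation. Fix $t=\l'\log n/n$. A fixed sequence of $k$ distinct edges has total cost at most $t$ with probability $\sim t^k/k!$ when $t=o(1)$, since the $EXP(1)$ density is $\sim 1$ near $0$. It has exactly $r$ red edges with probability $\binom kr 2^{-k}$. There are at most $n^{k-1}$ paths of length $k$ from $1$ to $2$. Hence the expected number $N_{k,r}(t)$ of such paths with $k$ edges, $r$ red edges and cost at most $t$ satisfies
\[
N_{k,r}(t)\le n^{-1}\frac{(\l'\log n)^k}{k!}\binom kr 2^{-k}.
\]
Write $k=\k\log n$ and $r=\rho\log n$. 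By Stirling, $N_{k,r}=n^{g(\k,\rho)+o(1)}$ with
\[
g(\k,\rho)=-1+\k\log\frac{\l'}{2\k}+\k+\k\log\k-\rho\log\rho-(\k-\rho)\log(\k-\rho).
\]
The function $g$ is increasing in $\rho$ for $\rho<\k/2$. Since $\rho\le\a$ and we expect $\k>2\a$, the maximum is at $\rho=\a$. Setting $\partial_\k g=0$ gives $\k-\a=\l'/2$, i.e.\ $\k^*=\a+\l'/2$. Substituting back,
\[
\max g=\frac{\l'}2+\a\log\frac{\l'}{2\a}+\a-1 .
\]
This expression is strictly increasing in $\l'$ for $\l'>0$, so $\l$ in Theorem~\ref{th2} is exactly the threshold where $\max g$ crosses $0$. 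As a sanity check, dropping the colour constraint ($\rho=\k/2$) recovers the classical value $\l=1$.

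\textbf{Lower bound.} Take $\l'=(1-\e)\l$, so that $\max g<-\d(\e)<0$. Summing $N_{k,r}$ over $r\le L$ and all $k$ shows that w.h.p.\ no admissible path costs at most $t$. Two points need care. First, $k$ ranges up to $n$, but for $k\ge K\log n$ with $K$ large the factor $(e\l'\log n/k)^k$ is superpolynomially small, so the sum is dominated by $k=O(\log n)$. Second, the $O(\log^2n)$ pairs $(k,r)$ contribute only a polylog factor, which is absorbed by $n^{-\d}$.

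\textbf{Upper bound.} Take $\l'=(1+\e)\l$ and show that w.h.p.\ some path with about $\k^*\log n$ edges and at most $\a\log n$ red edges has cost at most $\l'\log n/n$. A direct second moment on $N_{k,r}$ would work, but controlling overlapping paths is messy. I would instead use a two-sided exploration, as in the analysis of first-passage percolation on $K_n$ (Janson's method), with labels $(\text{cost},\#\text{red})$.

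From vertex $1$, grow the set $S_1$ of vertices reachable by paths of length about $k_1=\frac12\k^*\log n$, with at most $\a\log n/2$ red edges and cost at most $t/2$. Restrict to a tree-like exploration, keeping for each vertex only one witnessing path, so that edge costs stay independent. Build $S_2$ from vertex $2$ in the same way. The first-moment computation, with exponent $1+\max g>1$ split evenly between the two sides, predicts $|S_i|=n^{(1+\max g)/2-o(1)}\gg n^{1/2}$. We then need a single edge between $S_1$ and $S_2$ of cost $o(\log n/n)$ whose colour fits the remaining red budget. The number of candidate edges is $|S_1||S_2|\gg n$, so such an edge exists w.h.p. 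Running the explorations with slightly smaller budgets $(1-\e')$ leaves room for that edge.

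The main obstacle is to show that $|S_i|$ actually attains its expected order, i.e.\ concentration of the exploration. I would run it generation by generation as a branching process, layered by number of edges and number of red edges. In each layer a vertex has a Poisson$(\,\cdot\,)$ number of cheap children, split by colour into independent Poisson$/2$ parts. The issue is that the growth rate is polynomially large while the total depth is $O(\log n)$, so sampling without replacement perturbs each layer by only a $(1+o(1))$ factor as long as $|S_i|\le n^{1-\Omega(1)}$. Chernoff bounds at each layer then keep the counts within $(1\pm o(1))$ of their means.

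Finally, concentration of $Z_\a$ itself is just the combination of the two bounds. The claim in Theorem~\ref{th2} is only "$\sim$ w.h.p.", so no separate expectation argument is needed.
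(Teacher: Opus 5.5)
Your lower bound is correct and is the paper's first-moment argument with different bookkeeping. The paper sums over $k$ first (Poisson thinning) and bounds $n^{-1+\eta/2}\sum_{\ell\le L}(\eta\log n/2)^\ell/\ell!$ by its last term, while you optimise $g(\k,\rho)$. Using $k!\ge(k/e)^k$ and $\binom kr\le k^k/(r^r(k-r)^{k-r})$ you even get $N_{k,r}\le n^{g(\k,\rho)}$ with no error term. Your upper-bound architecture also matches the paper's. You explore from $1$ and from $2$ with half the cost budget and half the red budget until each side has $n^{h-o(1)}$ vertices, where $h=\frac12(1+\max g)$ is exactly the paper's $h(\l'/2,\a/2)>\frac12$, and then close with a cheap blue edge. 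The gap is the step you yourself call the main obstacle: proving $|S_i|\ge n^{h-o(1)}$ w.h.p.

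Layer-by-layer Chernoff bounds do not give this, and the claim that the counts stay within $(1\pm o(1))$ of their means is false. About $\k^*\log n$ edges share a budget of $\l'\log n/n$, so the useful edges cost $\Theta(1/n)$ and each vertex has only $O(1)$ children in the relevant cost range. The randomness of the first few generations therefore never averages out. For example, the costs and colours of the few cheapest edges at vertex $1$ rescale whole subtrees, so $|S_i|$ is a random multiple of its mean, not $(1+o(1))$ times it.

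Moreover, $S_i$ is a large-deviation subpopulation: its red fraction is $2\a/(2\a+\l')<\frac12$ and its paths are atypically cheap. Concentration of layer totals says nothing about it. Conditionally on one layer, the mean of the next relevant cell depends on the whole (red, cost) profile, including cells with tiny expected counts.

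What is needed is a change of measure that makes these paths typical, together with a proof that the random prefactor is a.s.\ positive. The paper indexes by cost (time) rather than by number of edges, couples DA with a coloured Yule process, and uses the martingale $M_\th(t)=e^{-\psi(\th)t}\sum_u e^{-\th R_u}$. This martingale is $L^2$-bounded, and its limit is a.s.\ positive because the Yule process never dies ($q=\E q^{|\cY(T)|}$ forces $q=0$). Comparing with the tilts $\th\pm u$ then shows that the tilted mass at time $a\log N$ sits on particles with red depth in $[\bar\g\pm\d]\log N$, which forces at least $N^{h(a,\g)-o(1)}$ of them. Biggins' theorem for branching random walks, or a second-moment argument run after first growing many independent seeds, would be alternatives, but some such ingredient is required.

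For the same reason, your aside that a direct second moment on $N_{k,r}$ would work is doubtful. Pairs of paths sharing their first few edges at vertex $1$ or $2$ make $\E N^2/(\E N)^2$ bounded but not $1+o(1)$, which yields only a probability bounded away from zero, not w.h.p.
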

Note that if we let $\a \nearrow 1/2$ then $\l \rightarrow 1$. This makes sense because the color constraint becomes essentially inactive, and Janson \cite{J2} proved that without the color constraint the minimum cost of a path from 1 to 2 is w.h.p. $\log n /n$. Similarly, if  $\a\searrow 0$ then $\l\to2$, reflecting the fact that we are only able to use blue edges. 

At the present moment we do not have a proof that $\E Z_{\a}\sim  \frac{\l\log n}{n}$, although we conjecture this to be true.
\subsection{Assignments}
For this problem each edge $e$ of the complete bipartite graph is given a cost $X_e$ independently distributed as exponential mean 1. Edges are then independently colored red or blue with probability 1/2. We seek a minimum cost perfect matching that contains at most $\a n$ red edges, where $0<\a<1/2$. (We assume that $\a n$ is an integer.) The cost $C(M)$ of a perfect  matching $M$ is given by $\sum_{e\in M}X_e$. In this context we prove 
\begin{theorem}\label{th3}
Let $Z_\a=Z_{\a}(PM)$ denote the minimum cost of a perfect matching $M$ with $|E_R\cap M|\leq \a n$, where $0<\a<1/2$. Let $\psi$ be the solution to
\[
\psi^{-1}e^{-\psi/2}\int_{x=0}^{\psi/2}\frac{e^x-1}{x}\dd x=\a.
\]
Then
\beq{PMcost}{
\E Z_{\a}\sim\frac{\p^2}{3}+\int_{x=0}^1\frac{e^{-\psi x/2}\log(1-x)}{x}\dd x-\a\psi.
}
Furthermore, $Z_\a\sim\E Z_\a$ w.h.p.
\end{theorem}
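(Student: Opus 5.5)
Lagrangian relaxation is the natural route, as the form of \eqref{PMcost} suggests: the term $-\a\psi$ is the dual price of the constraint $|E_R\cap M|\le \a n$. For a multiplier $\l\ge 0$, replace the cost of every red edge $e$ by $X_e+\l/n$ and leave blue costs unchanged; call the resulting minimum perfect matching cost $W_\l$. Weak duality gives $Z_\a\ge W_\l-\l\a$ for every $\l$.

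Conversely, if we pick $\l=\psi$ so that the unconstrained minimiser of the perturbed problem uses about $\a n$ red edges, then, up to $o(1)$ corrections, it is feasible for $Z_\a$ and
\[
Z_\a\le W_\psi-\psi\cdot\frac{|M_\psi\cap E_R|}{n}.
\]
Hence the plan is to compute $\lim \E W_\l$ and the limiting fraction $\r(\l)$ of red edges in $M_\l$. We then take $\psi$ with $\r(\psi)=\a$. By the envelope theorem, $\r(\l)=\frac{d}{d\l}\lim\E W_\l$, so the value is $\lim \E W_\psi-\a\psi$.

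To compute $\lim\E W_\l$, observe that the perturbed edge costs are i.i.d.\ with distribution function
\[
F(t)=\tfrac12(1-e^{-t})+\tfrac12(1-e^{-(t-\l/n)})\mathbbm{1}_{t\ge\l/n}.
\]
Only the behaviour of $F$ near $0$ at scale $1/n$ matters, so rescale: $nF(s/n)\to g(s)=\tfrac12 s+\tfrac12(s-\l)^+$. Then apply the local weak limit / cavity method of Aldous (the $\z(2)$ theorem), or more concretely Wästlund's general formula for random assignment with pseudo-dimension~1 density. This gives
\[
\lim\E W_\l=\int_0^\infty u(x)\,\dd x
\]
for the solution of the fixed-point equation
\[
\bar G(x)=\exp\Big(-\int g'\text{-weighted}\ \bar G(s-x)\,ds\Big),
\]
where $\bar G$ is the tail of the limiting ``cavity'' variable.

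With a piecewise-linear $g$, this equation should be explicitly solvable along the lines of the $\l=0$ case, where $\bar G(x)=1/(1+e^x)$. Because the red/blue mixture splits into two exponential rate-$\tfrac12$ pieces, one shifted by $\l$, the integrals should produce the terms $\int_0^1 e^{-\psi x/2}\log(1-x)/x\,\dd x$ and $\frac{e^x-1}{x}$. Differentiating in $\l$ should yield $\r(\l)=\l^{-1}e^{-\l/2}\int_0^{\l/2}\frac{e^x-1}{x}\dd x$. I would verify this by checking $\r(0)=\tfrac12$, which is consistent with unconstrained matchings being half red.

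For rigour, I would rely on Wästlund's method, which is exact in finite $n$ for expected costs with an exponential-type distribution near zero, together with monotonicity and convexity of $\E W_\l$ in $\l$. That gives the lower bound directly by weak duality. The upper bound needs concentration: $W_\l$ is concentrated by Talagrand's inequality (all edges used have cost $O(\log n/n)$ w.h.p.), and the number of red edges in $M_\l$ is concentrated near $\r(\l)n$. The latter follows from convexity: the concentration of $W_{\l\pm\e}$ sandwiches the red count between the finite differences, which converge to the derivative. At most $o(n)$ red edges then need repairing, at $o(1)$ total cost, by swapping via short alternating cycles through cheap blue edges.

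The main obstacle is the explicit evaluation of the limit $\lim\E W_\l$ in closed form, i.e.\ solving the cavity fixed-point equation for the mixed distribution and matching it to \eqref{PMcost}. The concentration and repair steps are comparatively routine.
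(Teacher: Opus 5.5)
\textbf{The missing step is the formula itself.} Your Lagrangian frame (weak duality for the lower bound, a perturbed optimum for the upper bound) is the same as the paper's. But the step that carries the content of the theorem is never done. You do not compute $\lim\mathbb{E}W_\lambda$ or $\rho(\lambda)$; you only say the cavity equation ``should'' produce the integrals in the statement. Checking $\rho(0)=\tfrac12$ only recovers the $\lambda=0$ endpoint (Aldous's theorem), not the formula for $\lambda>0$.

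Citing W\"astlund does not close this gap. His exact finite-$n$ formula rests on the memorylessness of exponential costs. The perturbed cost $X_e+\frac{\lambda}{n}\mathbbm{1}_{e\in E_R}$ is instead a mixture of an exponential and a shifted exponential. Its density jumps from $\tfrac12$ to $1$ exactly at the scale $1/n$ where the optimal matching lives, and that jump is the whole effect being measured. Your fixed-point equation $\bar G(x)=\exp(-\int_0^\infty g'(s)\bar G(s-x)\,ds)$, written with $\bar G=e^{-H}$, becomes $H'(x)=\tfrac12e^{-H(-x)}+\tfrac12e^{-H(\lambda-x)}$. You give no way to solve this, and extending Aldous's local-limit theorem to this $n$-dependent law would itself need proof.

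What is missing is the paper's adaptation of W\"astlund's extra-vertex argument to the mixed law:
\begin{itemize}
\item Add a vertex $b^*$ joined to $A$ by edges of vanishing rate.
\item Show $\mathbb{E}(C(n,r)-C(n,r-1))=P(n,r)/r$, where $P(n,r)$ is the normalized probability that $b^*$ is used.
\item Compute $P(n,r)$ by conditioning on the minimum $\gamma$ of the competing edges at the end of the augmenting path. This uses that the hazard rate of $W_{\theta,e}$ is $1/(1+e^{\gamma})$ for $\gamma<\theta$ and $1$ for $\gamma\ge\theta$.
\end{itemize}
This yields $\mathbb{E}L_n(\theta)=\sum_{r=1}^n\Upsilon(r,\theta)/r$ exactly. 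Its asymptotics at $\theta=\psi/n$ give $\frac{\pi^2}{3}+\int_0^1 e^{-\psi x/2}\log(1-x)\,x^{-1}\,dx$, and its $\theta$-derivative gives the equation defining $\psi$.

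\textbf{A smaller gap in the upper bound.} Knowing only $|M_\psi\cap E_R|=\alpha n+o(n)$, you have not shown that repairing the excess red edges costs $o(1)$. The cheap blue augmenting paths one can guarantee (as in Lemma~\ref{Mn-1}) cost order $n^{-10/11}$ each, so $o(n)$ repairs are only bounded by $o(n^{1/11})$.

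Within your framework the easy fix is to use $M_{\psi+\delta}$ and let $\delta\downarrow 0$ by continuity. This matching is feasible w.h.p.\ once $\rho(\psi+\delta)<\alpha$, which requires $\rho$ to be strictly decreasing.

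The paper avoids concentration of the red count altogether. At the exact dual maximizer $\theta^*$ there are almost surely at most two minimizers $M_1,M_2$, and their red counts straddle $\alpha n$. Walking around $M_1\oplus M_2$ from the start point given by the gasoline lemma produces a matching of size $n-1$ with at most $\alpha n$ red edges and cost at most $\phi(\theta^*)+\theta^*$. A single blue augmenting path then completes it.
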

We note that as $\a\nearrow 1/2$, we have $\psi\rightarrow 0$. Noting that $\int_{x=0}^1\frac{\log(1-x)}{x}\dd x=-\p^2/6$, we get $\E Z_\a \rightarrow \p^2/6$. This is to be expected, since without the color constraint Aldous \cite{Aldous} proved that the expected minimum cost is $\p^2/6$.

\subsection{Asymmetric Traveling Salesperson Problem}
For this problem each edge $e$ of the complete digraph is given an independent exponential mean 1 cost $X_e$. Edges are then independently colored red or blue with probability 1/2.  We seek a minimum cost tour $T$ that contains at most $\a n$ red edges, where $\a<1/2$. The cost of a tour $T$ is given by $C(T)=\sum_{e\in T}X_e$. Karp \cite{Karp} described a {\em patching algorithm} that w.h.p. finds a tour of a cost $1+o(1)$ of the minimum. We use Karp's argument \cite{Karp} and prove
\begin{theorem}\label{th4}
Let $Z_{\a}(ATSP)$ denote the minimum cost of a Traveling Salesperson Tour $T$ with $|E_R\cap T|\leq \a n$. Then w.h.p.
\[
Z_{\a}(ATSP)\sim Z_{\a}(PM).
\]
 \end{theorem}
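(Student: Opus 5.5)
The lower bound is immediate. A directed Hamilton cycle in the complete digraph $\vec K_n$ is a permutation of $[n]$, which is a perfect matching in the associated bipartite graph $K_{n,n}$: join left copy $i$ to right copy $j$ when the arc $(i,j)$ is used. Loops can be given cost $\infty$ (or handled by the usual coupling with the assignment problem), at a cost of $o(1)$. Hence $Z_\a(ATSP)\ge Z_\a(PM)$. By Theorem \ref{th3}, $Z_\a(PM)$ is concentrated around a positive constant, so it suffices to prove an upper bound $Z_\a(ATSP)\le Z_\a(PM)+o(1)$ w.h.p.

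For the upper bound we follow Karp's patching argument. First write each cost $X_e$ as $\min(X_e',X_e'')$, where $X'_e,X''_e$ are independent exponentials. $X'_e$ has mean $1+1/k$ for $k=\log n$, say, and $X''_e$ has mean $k+1$, so $\min(X'_e,X''_e)$ is $EXP(1)$. The colors are kept as they are. We solve the color-constrained assignment problem with costs $X'$. By Theorem \ref{th3} with rescaling, its value is at most $(1+1/k)(Z_\a(PM)+o(1))$. The key structural claim is that the optimal constrained permutation $\p$ has w.h.p. $O(\log n)$ cycles. For the unconstrained problem this follows from symmetry: conditional on the matching, it is a uniformly random permutation. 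Here I plan a Lagrangian view. The constrained optimum is the optimum of the unconstrained problem with red costs shifted by a multiplier $\psi$ (exactly the $\psi$ of Theorem \ref{th3}), up to a bounded number of swaps. The shifted cost matrix is still invariant under relabelling the columns, so the cycle structure of the optimal permutation is uniform and has $O(\log n)$ cycles w.h.p. Alternatively, we can do a few 2-exchanges to merge cycles.

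Next, patch the cycles using the edges with independent costs $X''$. Karp's procedure merges the smallest cycle into another one by a 2-swap, replacing arcs $(a,\p(a)),(b,\p(b))$ with $(a,\p(b)),(b,\p(a))$. We restrict the new arcs to have small $X''$ cost, at most $\d=(\log n)^3/n$ say, and to be \emph{blue}, which happens with probability $1/2$ independently. Then each patch does not increase the red count. It may even drop the red count if it removes red arcs, which is harmless. So the final tour has at most $\a n$ red arcs. As in Karp's analysis, with $O(\log n)$ cycles and the smallest cycle of size at least about $n/\log n$ after merging, the number of available swap pairs is large. The probability that no blue pair with both $X''$ values below $\d$ exists is tiny. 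The total added cost is therefore $O(\log n\cdot\d)=o(1)$.

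Finally, the failure events have probability $o(1)$, so w.h.p. $Z_\a(ATSP)\le(1+1/k)Z_\a(PM)+o(1)$. Combined with the concentration in Theorem \ref{th3}, this gives $Z_\a(ATSP)\sim Z_\a(PM)$. I expect the main obstacle to be the cycle-count claim. The color constraint breaks the pure column-relabelling symmetry, since colors are attached to arcs. However, the colors are i.i.d. and relabel together with the costs, so the joint distribution of (cost, color) arrays is still invariant under column permutations. Hence the optimal constrained assignment, with ties broken uniformly, is a uniformly random permutation composed with a fixed one. I would verify this carefully; it gives the $O(\log n)$ cycle bound directly.
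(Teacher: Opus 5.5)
Your overall plan matches the paper's. You embed tours as perfect matchings of $K_{n,n}$ for the lower bound, use column-permutation symmetry to make the constrained optimum a uniform permutation with $O(\log n)$ cycles, and then patch Karp-style with blue arcs only, so the red count never increases. The direct symmetry argument in your last paragraph is the right one. The ``Lagrangian view up to a bounded number of swaps'' is unjustified, since the repair in Lemma~\ref{lemm} follows an alternating path of unbounded length, and should be dropped. Keep the diagonal of $K_{n,n}$ as i.i.d.\ entries rather than giving loops cost $\infty$, or column invariance fails.

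However, the patching step fails as written. The threshold $\delta=(\log n)^3/n$ is polynomially too small, and the claim that the smallest cycle has size about $n/\log n$ is false. A uniform permutation has a fixed point with probability tending to $1-e^{-1}$, and whatever merge order you use, that cycle must eventually be absorbed. Absorbing a cycle of size $s$ offers at most $s(n-s)$ swaps, so at most $n-1$ when $s=1$. Each swap is good with probability about $\frac14\bigl(\delta/(k+1)\bigr)^2\approx(\log n)^4/(4n^2)$. So the expected number of good swaps is about $(\log n)^4/(4n)\to0$, and patching fails with probability bounded away from $0$. You need $n\delta^2/\log^2 n\to\infty$ while $\delta\log n\to 0$. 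The paper takes $\eta_0=C_0n^{-1/3}$, giving failure probability $\exp(-\Omega(n^{1/3}/\log^2 n))$ per merge and added cost $O(n^{-1/3}\log n)$. You also need fresh randomness in each of the $O(\log n)$ rounds; the paper partitions the patching arcs into $m-1$ random classes, one per round.

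There is a second gap: the claim that a candidate arc is blue ``with probability $1/2$ independently'' is not justified. The constrained optimum $\pi'$ is a function of \emph{all} the colors, so conditioning on $\pi'$ biases the colors of the other arcs; for instance, a cheap arc outside $\pi'$ is more likely to be red. Your split $X=\min(X',X'')$ frees the costs but not the colors. The paper's device handles both at once. W.h.p.\ the constrained optimum uses no arc of cost above $\eta_0$, so it can be computed with the set $E_{Large}$ of such arcs deleted. Then the colors, and by memorylessness the costs, of arcs in $E_{Large}$ are unconditioned, and one patches only with blue arcs of $E_{Large}$ of cost in $[\eta_0,2\eta_0]$. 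In your framework the repair is to compute $\pi'$ from the arcs with $X'_e\le\eta_0$ only. This agrees with the true optimum w.h.p., so it still has $O(\log n)$ cycles. Then patch only through arcs with $X'_e>\eta_0$. With a larger threshold and this truncation your argument goes through, and the $X'/X''$ split becomes unnecessary.
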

The above theorems are stated for specific distributions. It is easy to adapt them to more general distributions and this is done in Section \ref{distributions}.
\section{Proof of Theorem \ref{th1}}

First we verify the value of $f$ by evaluating the series in \eqref{eqn:fdef}. Let $W_0$ be the principal branch of the Lambert $W$ function, i.e. $W_0(z)$ is defined for $z \ge -1/e$, and returns the unique value $w \ge -1$ such that $we^w=z$. Thus, 
\begin{equation}\label{eqn:W}
W_0(-xe^{-x})= \begin{cases}
    -x, & x \le 1\\
    -y, & x > 1
    \end{cases}
\end{equation}
where $y<1$ satisfies $ye^{-y}=xe^{-x}$. The Taylor series for $W_0$ is well known:
\[
W_0(z) =  \sum_{k=1}^\infty\frac{(-1)^{k-1} k^{k-1}}{k!}z^k.
\]
Let
\[
S(z):=  \sum_{k=1}^\infty\frac{ k^{k-2}}{k!}z^k \qquad \text{ so } \qquad zS'(z) =  \sum_{k=1}^\infty\frac{ k^{k-1}}{k!}z^k = -W_0(-z).
\]
Now since $S(0)=0$ we have
\[
    S(z) = \int_0^z \frac{-W_0(-t)}{t} \dd t = \int_0^{-W_0(-z)} \frac{w}{we^{-w}}(1-w)e^{-w} \dd w = \int_0^{-W_0(-z)} (1-w) \;\dd w = -W_0(-z) - \frac12 W_0(-z)^2 
\]
where we made the substitution $w = -W_0(-t)$, or $t=we^{-w}$. Now by the series definition of $f(x)$ in \eqref{eqn:fdef} and using \eqref{eqn:W}, we get
\[
f(x) = \frac{1}{x}\sum_{k=1}^\infty\frac{k^{k-2}}{k!}(xe^{-x})^k = \frac{1}{x} S(xe^{-x}) = \frac 1x \left[-W_0(-xe^{-x}) - \frac12 W_0(-xe^{-x})^2\right] = \frac{1}{x} \cdot \begin{cases}
    x- \frac12 x^2, & x \le 1\\
    y-\frac 12 y^2, & x > 1
    \end{cases}
\]
which matches the final expression in \eqref{eqn:fdef}.
\subsection{Dual problem}
Let $X$ denote the set of spanning trees of $K_n$. Our optimization problem is
\begin{align}
\text{Minimise:}&\hspace{0.25in} C(T)\label{Trees}\\
\text{Subject to:}&\hspace{0.25in} T\in X\nn\\
&\hspace{0.25in}|E_R\cap T|\leq {\a n}.\label{T1}
\end{align}
Here we have a classical problem in combinatorial optimization, viz.~find the minimum cost set that is a basis of one matroid (the cycle matroid of $K_n$) and independent in a second matroid (the partition matroid defined by \eqref{T1}).

We analyse this through Lagrangian relaxation. Define the dual function $\f(\th)$ for $\th\geq 0$ by
\beq{Lagrange}{
\f(\th)=-\th\a n+L_n(\th).
}
where $L_n(\th)=\min_{T\in X}\set{C_\th(T)}$ where $C_\th(T)=\sum_{e\in  {T}}C_\th(e)$ and $C_{\th}(e)=X_e+\th1_{e\in E_R}$,

Let $\th^*$ be the value of $\th$ maximizing $\f(\th)$ over all $\th \ge 0$. It then follows from Edmonds' theorem \cite{E1} on the intersection of matroid polyhedra and Geoffrion's theorem on Lagrangian relaxation \cite{Ge} that
\[
\f(\th^*)=\max_{\th\geq 0}\f(\th)=\min_{T\in X}\set{ {C(T):\eqref{T1}\text{ holds}}},
\]
To see this, observe that problem \eqref{Trees} is actually a linear program, since as observed in \cite{Ge}, $\f$ finds an $\bx$ that solves the linear program where $T\in X$ is replaced by $\bx\in conv(X)$. And then \cite{E1} shows that in the case of matroid intersection adding \eqref{T1} does not create new fractional vertices i.e. the incidence vectors of trees with few enough red edges define the set of extreme points

We also know that w.h.p. the minimum cost of a spanning tree that only uses blue edges is asymptotic to $2\z(3)= 2.40411380632...$, see Beveridge, Frieze and McDiarmid \cite{BFM}. So, 
\beq{bounds}{
\z(3)\lesssim \f(\th^*)\lesssim 2\z(3).
}
\subsection{Analysis of $\f$}
Let the $U_e,e\in E(K_n)$ be a collection of independent and identically distributed (i.i.d.) uniform random variables on $[0,1]$ and let $B_e,e\in K_n$ be a collection of i.i.d. Bernoulli random variables with success probability 1/2. For each $e\in E(K_{n})$ set 
\[
W_{\th,e} = U_e + \theta B_e\text{ and }L_n(\th) = \min_{T \in X} \sum_{e \in T} W_{\th,e}.
\]
Then
\[
\E\f(\th)=-\th\a n+\E L_n(\th).
\]
Following Janson \cite{J1} we write
\begin{align*}
\E L_n(\th)& = \E\brac{ \sum_{e \in T_0} W_{\th,e}} = \E\brac{ \sum_{e \in T_0} \int_{x=0}^\infty1_{ { x < W_{\th,e}}}dx}=\int_{x=0}^\infty \E(\kappa(G_{n,\hat p(x)})-1) \dd x,
\end{align*}
where $\k(G)$ denotes the number of connected components of the graph $G=G_{n,\hat p(x)}$, and 
\[
\hat p(x) = \Pr(W_{\th,e} \leq x) =\frac12\Pr(U_e \leq x) + \frac12\Pr(U_e \leq x - \th) = \frac12\min\{x,1\} +\frac12\min\{(x-\theta)^+,1\}.
\]

\bigskip\noindent
 \textbf{Case 1.1.} $\theta\ge K/n$ for some large constant $K$. 
The dual function $\f(\th)=L_n(\th)-\th\a n$ and w.h.p. $L_n(\th)=O(1)$, independent of $\th$, given that we can just use blue edges for a tree. We have to maximise this over $\th\geq 0$. This rules out $\th\geq K/n$ for some constant $K>0$, as we know that $\max\f(\th)\geq \z(3)$.

\bigskip\noindent
\textbf{Case 1.2.} $0 < \theta < K/n$. Then, using $\th < 1$ we have
\[
\hat p(x) = \begin{cases}
x/2, & 0 < x < \theta, \\
x - \theta/2, & \theta < x < 1, \\
(1+x-\theta)/2, & 1 < x < 1 + \theta, \\
1, & x > 1+\theta.
\end{cases}
\]
Thus, since $\hat p(x)=1$ implies that $\k(G_{n,\hat p(x)})=1$,
\begin{align*}
\E L_n(\th) &= \int_{x=0}^{1+\th} \E(\kappa(G_{n,\hat p(x)})-1) \dd x\\
&=\int_0^\theta \E (\kappa(G_{n, x/2})) \dd x + \int_\theta^1 \E(\kappa(G_{n, 
x -\th/2})) \dd x + \int_1^{1+\theta} \E(\kappa(G_{n, (1+x-\th)/2})) \dd x-(1+\th)\\
            &= 2\int_0^{\theta/2} \E (\kappa(G_{n, x}))\dd x + \int_{\theta/2}^{1-\theta/2} \E (\kappa(G_{n, x}))\dd x + 2\int_{1-\theta/2}^{1} \E (\kappa(G_{n, x}))\dd x-1+O(1/n),\\
  &=  \int_0^{\theta/2} \E (\kappa(G_{n, x}))\dd x + \int_{0}^{1} \E (\kappa(G_{n, x}))\dd x -1+O(1/n)\\
&= \int_0^{\theta/2} \E (\kappa(G_{n, x}))\dd x + \z(3)+O(1/n),\qquad\text{uniformly for all }\th<K/n. 
\end{align*}
{\bf Explanation:} on the third line each of the three integrals is the same as one of the integrals in the previous line after making an appropriate substitution. On the fourth line we have used the fact that for $x \ge 1-O(1/n)$ we have $\E(\k(G_{n, x}))=1+o(1)$. On the final line we used the fact that $\int_{0}^{1} \E (\kappa(G_{n, x}))\dd p -1=\z(3)+O(1/n)$, due to Janson \cite{J1}.

\begin{claim} $\E (\kappa(G_{n, p})) = n f(np)+ o(n)$ uniformly for $p < K/n$.
\end{claim}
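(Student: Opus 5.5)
Write $p=c/n$ with $0\le c<K$. The plan is to count components by counting tree components and show that everything else contributes $o(n)$. Every graph satisfies $\k(G)=\tau(G)+u(G)$, where $\tau(G)$ is the number of tree components and $u(G)$ is the number of components containing a cycle. We will show $\E\tau(G_{n,c/n})=nf(c)+o(n)$ and $\E u(G_{n,c/n})=o(n)$, both uniformly in $c\in[0,K]$.

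For the tree components, the expected number of isolated trees on $k$ vertices is
\[
\binom nk k^{k-2}p^{k-1}(1-p)^{\binom k2-(k-1)+k(n-k)}.
\]
Here $k^{k-2}$ counts the trees, $p^{k-1}$ accounts for their edges, and the last factor accounts for no other edges inside the set and no edges to the rest. For fixed $k$, using $\binom nk=\frac{n^k}{k!}(1+O(k^2/n))$ and $(1-c/n)^{kn-O(k^2)}=e^{-ck}(1+O(k^2/n))$, this equals
\[
n\cdot\frac{k^{k-2}}{k!}c^{k-1}e^{-ck}\bigl(1+O(k^2/n)\bigr)=\frac nc\cdot\frac{k^{k-2}}{k!}(ce^{-c})^k\bigl(1+O(k^2/n)\bigr),
\]
and the error term is uniform for $c\le K$. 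We fix a large $k_0$ and sum over $k\le k_0$. This gives $n\cdot\frac1c\sum_{k\le k_0}\frac{k^{k-2}}{k!}(ce^{-c})^k+O(k_0^{O(1)})$. For the terms with $k>k_0$ we do not need the asymptotics. There are at most $n/k_0$ components of size larger than $k_0$, so these terms contribute at most $n/k_0$. The tail $\frac1c\sum_{k>k_0}\frac{k^{k-2}}{k!}(ce^{-c})^k$ is at most $\frac1c\sum_{k>k_0}k^{-5/2}(ce^{1-c})^k$ by Stirling, and $ce^{1-c}\le1$, so the tail is $O(k_0^{-3/2}/c)$. The factor $1/c$ is harmless after writing $\frac1c(ce^{-c})^k=c^{k-1}e^{-ck}\le \max(1,K)^{k-1}\cdot$(a bounded quantity); more simply, $c^{k-1}e^{-ck}\le e^{-1}(ce^{1-c})^{k-1}\le1$ for $k\ge2$, so the tail is uniformly $O(k_0^{-3/2})$. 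The $k=1$ term is $e^{-c}$ and is covered by the finite sum. Letting $k_0\to\infty$ slowly then gives $\E\tau=nf(c)+o(n)$ uniformly. The identification of the series with $f$ is exactly \eqref{eqn:fdef}, which is already verified above.

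For the components with cycles, each such component contains a cycle, so $u(G)$ is at most the number of components of size larger than $k_0$ plus the number of cycles of length at most $k_0$. The first quantity is at most $n/k_0$. The expected number of cycles of length $\ell$ is at most $\frac{n^\ell p^\ell}{2\ell}\le K^\ell$, so the second quantity has expectation at most $k_0K^{k_0}=o(n)$ when $k_0\to\infty$ slowly. The first quantity is only $n/k_0$, which is also $o(n)$. Hence $\E u=o(n)$. The same truncation, with $k_0=k_0(n)\to\infty$ slowly enough that $k_0^{O(1)}K^{k_0}=o(n)$, handles both parts together.

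The main obstacle is uniformity near the critical window $c\approx1$ and for $c$ near $0$. I expect the crude bounds above to cover both. The Stirling tail bound uses only $ce^{1-c}\le1$, which holds for every $c$, and the fixed-$k$ asymptotics are uniform on the compact range $c\in[0,K]$. So no separate treatment of $c=1$ should be needed, although the rate in the $o(n)$ is weaker there.
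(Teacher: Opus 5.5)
Your argument is correct and is essentially the paper's. Both split $\kappa$ into three parts. Small tree components are computed exactly from $\binom nk k^{k-2}p^{k-1}(1-p)^{\binom k2-(k-1)+k(n-k)}$. Small non-tree components get a first-moment bound: you count short cycles, where the paper counts $k$-sets carrying a spanning tree plus one extra edge. Components above the size threshold number at most $n/k_0$ deterministically; your slowly growing $k_0$ plays the role of the paper's $\log^2 n$.

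There is one small slip in the tail estimate. The inequality $c^{k-1}e^{-ck}\le e^{-1}(ce^{1-c})^{k-1}$ does not cancel the factor $e^k$ coming from Stirling. The identity $\frac1c(ce^{1-c})^k=e^{1-c}(ce^{1-c})^{k-1}\le e$ does, and it gives the uniform $O(k_0^{-3/2})$ tail bound you want.
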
 
\begin{proof}[Proof of claim] 
Now $\E (\kappa(G_{n, p})) = \E(A) + \E(B) + \E(C)$ where $A$ is the number of tree components on at most $\log^2 n$ vertices, and $B$ is the nontrees on at most $\log^2 n$ vertices, and $C$ is all components on more than $\log^2 n$ vertices. $B$ and $C$ will be small. Indeed, deterministically we have $C \le n/\log^2 n$. For $B$ we have
\begin{equation*}
    \E (B) \le  \sum_{k=1}^{\log^2 n} \binom nk k^{k-2}\binom k2  p^k (1-p)^{  k(n-k)} \le \sum_{k=1}^{\log^2 n} \frac{\brac{knpe^{-np}}^k}{k!}  (1-p)^{-\log^4 n} = O(\log^2 n),
\end{equation*}
where the final bound is because $ze^{-z} \le 1/e$ for all $z > 0$, $k! > (k/e)^k$, and $(1-p)^{-\log^4 n}=1+o(1)$.
Now for $A$ we have
\begin{align*}
    \E(A) &= \sum_{k=1}^{\log^2 n} \binom nk k^{k-2} p^{k-1} (1-p)^{k(n-k)+\binom k2 - k + 1}\\
    & = \brac{1 + O\brac{\frac{\log^4 n}{n}}} \frac{1}{p}\sum_{k=1}^{\log^2 n} \frac{k^{k-2} }{k!} (np)^{k} e^{-knp} = nf(np) + o(n). 
\end{align*}
\end{proof}
 In the next subsection we will show that $\E L_n(\th)$ is highly concentrated. Thus we have, on making the substitution $u=nx$ in the integral, 
\[
\f(\th) = - \th \a n+\int_0^{n\theta/2} f(u) \dd u +\z(3)  +o(1)
\]
w.h.p. 
Now differentiating the above w.r.t. $\th$ and setting it equal to 0, we see that $\th^*=2\psi/n$ where
$f(\psi) \sim 2\a.$ Thus w.h.p.
\[
\f(\th^*)=-2\a\psi+\int_0^{\psi} f(u) \dd u +\z(3)  +o(1),
\]
and Theorem \ref{th1} follows.
\subsection{Concentration }\label{concsec}
The goal of this section is to prove the following lemma.
\begin{lemma}\label{lm:conc}
For a fixed $0\leq \th\leq K/n$,
\beq{concphi}{
\Pr(|L_n(\th)- \E(L_n(\th))|\geq n^{-1/5}) =o(n^{-100}).
}
\end{lemma}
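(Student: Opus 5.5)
We will prove \eqref{concphi} using Talagrand's inequality, after truncating the edge set so that the minimum spanning tree with respect to the weights $W_{\th,e}$ is certified by a small set of short edges. The underlying variables are the $U_e$ and $B_e$, $e\in E(K_n)$, and they are independent.

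The first step is truncation. Let $\ell=c\log n/n$ for a large constant $c$. Let $E_\ell$ be the set of edges with $U_e\le \ell$, and define $W'_{\th,e}=\min\set{W_{\th,e},\,\ell+\th}$. Since $\th\le K/n$, the red edges with $U_e\le\ell$ have $W_{\th,e}\le \ell+\th$. Consider the random graph $G_{n,\ell}$ formed by the edges with $U_e\le \ell$. It is connected with diameter $O(\log n/\log\log n)$ with probability $1-o(n^{-100})$, provided $c$ is large. Moreover, every edge outside $E_\ell$ is the heaviest edge on some cycle through short edges. By the cycle property of minimum spanning trees, on this event the MST for $W_\th$ uses only edges of $E_\ell$, so $L_n(\th)=L'_n(\th)$, where $L'_n$ is the MST value for the truncated weights $W'$. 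Because $0\le L_n\le n$ always, replacing $\E L_n$ by $\E L'_n$ changes the expectation by $o(n^{-99})$. It therefore suffices to prove concentration for $L'_n$.

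The second step applies Talagrand's inequality in its certificate form, viewing $L'_n$ as a function of the independent pairs $(U_e,B_e)$. Changing a single pair changes $L'_n$ by at most $\ell+\th=O(\log n/n)$, since each edge appears at most once in the tree. For a certificate, note that $L'_n\le s$ is witnessed by the $n-1$ tree edges of a spanning tree with weight at most $s$. We use the version of Talagrand's inequality for functions that are Lipschitz with respect to weighted Hamming distance and have certificates of size $r=n-1$. In that version the variance proxy is $\sum_{e\in T}c_e^2\le n\cdot O(\log^2n/n^2)=O(\log^2 n/n)$, where $c_e=O(\log n/n)$ is the Lipschitz constant of coordinate $e$. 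This gives
\[
\Pr(|L'_n-M|\ge t)\le 4\exp\brac{-\Omega\brac{\frac{t^2 n}{\log^2 n}}},
\]
where $M$ is a median of $L'_n$. Taking $t=n^{-1/5}$, the exponent is $n^{3/5}/\log^2 n$, so the bound is far smaller than $n^{-100}$. This also shows $|M-\E L'_n|=O(\log n/\sqrt n)$, so the median can be replaced by the mean.

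The main obstacle is the certificate step. Talagrand's inequality needs one-sided certificates: if $L'_n(\omega)\ge s$, we must bound from below how much the configuration has to change to reach $L'_n\le s-t$. The standard argument handles this as follows. Let $T'$ be the tree achieving the smaller value at the modified configuration. Compare it with the weights at $\omega$ on the coordinates where the two configurations agree. On those edges the weights are the same, and each of the disagreeing coordinates contributes at most $O(\log n/n)$. This yields $t\le \sum_{e\in T',\ \text{changed}} c_e$, which is exactly what the weighted-Hamming form of Talagrand's inequality requires. If this route gives trouble, the fallback is to use the Azuma-type bound for Lipschitz functions restricted to a high-probability event. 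In that approach we expose the tree edges in a vertex-exposure martingale with $n$ steps, each having range $O(\log n/n)$ on the good event. That gives exponent $t^2/(n\cdot\log^2n/n^2)=n^{3/5}/\log^2 n$, which is again sufficient.
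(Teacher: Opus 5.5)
Your proposal is correct and follows essentially the paper's route. Truncate the weights so that, off an event of probability $o(n^{-100})$, the optimal tree and hence $L_n(\th)$ are unchanged; then apply Talagrand's inequality with variance proxy $(n-1)\times(\text{truncation level})^2$ and pass from median to mean. The paper truncates at $n^{-9/10}$ (Claim~\ref{largest}) and uses the convex-Lipschitz form for $\inf_{\bal\in\cF}\sum_i\a_iX_i$. Your cut at $c\log n/n$ accepts a merely polynomial truncation error in exchange for a much stronger exponent.

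The one step to state more carefully is the direction of the certificate step. In the convex-distance inequality the weights (here $c_e$ on the edges of $T'$) must depend only on the base point. So run it from the low-value configuration, whose tree is $T'$, to the set $\set{L'_n\geq s}$, not from $\omega$ to $\set{L'_n\leq s-t}$ as your wording suggests. The inequality $t\le\sum_{e\in T',\,\text{changed}}c_e$ you derive is exactly what this corrected setup needs, and it gives both tails.
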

\begin{proof}
Recall that $\f(\th)=L_n(\th) - \th \a n$ (as defined in \eqref{Lagrange}). We will use Theorem 8.1.1 of Talagrand \cite{Tal}. Let $0<X_i\leq \xi,i=1,2,\ldots,N$ be a sequence of independent positive random variables and let $\cF$ be a family of $N$-tuples $\bal=\a_i,i=1,2,\ldots,N$. Let
\beq{Ztal}{
Z=\inf_{\bal\in\cF}\sum_{i=1}^N\a_iX_i.
}
Let $\s=\sup_{\bal\in\cF}\brac{\sum_{i=1}^N\a_i^2}^{1/2}$ and let $\m$ be a median of $Z$. Then, for all $u>0$,
\beq{Tal1}{
\Pr(|Z-\m|\geq u)\leq 4\exp\set{-\frac{u^2}{4\s^2}}.
}
We need the following claim: let $T_\th$ denote the minimum cost spanning tree with costs $C_\th$. 
\begin{claim}\label{largest}
Let $0\leq \th\leq K/n$. Then $\Pr(\max_{e\in T_\th}W_{e,\th}\geq n^{-9/10})=o(n^{-200})$.
\end{claim}
{\em Proof of claim}
Let $L=n^{-9/10}$. Just consider running Kruskal's greedy algorithm. The probability that the graph is not connected by the time we reach edges of cost $W_{\th,e}\leq L/2$ is certainly $o(n^{-200})$. ($U_e\leq p=L/2-\th$ implies that $W_{\th,e}\leq L/2$ and $\Pr(G_{n,p}$ is not connected) is at most $\sum_{k=1}^{n/2}\binom{n}{k}k^{k-2}p^{k-1}(1-p)^{k(n-k)}=\sum_{k=1}^{n/2}O(n^{-1}(npe^{-np})^k)=o(n^{-200})$.)\\
{\em End of proof of claim}

Let $X_e=n^{9/10}\min\set{C_{\th}(e),n^{-9/10}}$ for $e\in E(K_n)$. The claim implies that if we compute the minimum spanning tree with costs $X_e$, then with probability $1-o(n^{-200})$ we get the same spanning tree as if we used costs $C_\th(e)$, but with cost inflated by a factor $n^{9/10}$. To apply \eqref{Tal1} we let $N=\binom{n}{2}$ and  we let $\bal$ be the 0/1 incidence vectors of the spanning trees of $K_n$. Thus in this case, $Z$ of \eqref{Ztal} is the cost of the minimum spanning tree using costs $X_e$. Now $\s\leq n^{1/2}$ and $u=n^{3/5}$ so that from \eqref{Tal1} we have
\[
\Pr(|Z-\m|\geq n^{3/5})\leq 4\exp\set{-\frac{n^{1/5}}{4}}=o(n^{-200}).
\] 
It follows that 
\beq{eqconc1}{
\Pr(|L_n(\th)-\wh\m|\geq n^{-3/10})=o(n^{-200})
}
 where $\wh\m=\m n^{-9/10}$. It follows immediately that 
\beq{eqconc3}{
\E L_n(\th)\geq \wh\m-2n^{-3/10}.
}
 On the other hand
\beq{eqconc2}{
\E L_n(\th)\leq \wh\m+n^{-3/10}+n^2\Pr(L_n(\th)\geq  \wh\m+n^{-3/10})\leq \wh\m+2n^{-3/10}.
}
So, from \eqref{eqconc1}, \eqref{eqconc3}, \eqref{eqconc2} we get $|L_n(\th)-E L_n(\th)|\leq 4n^{-3/10}$ with probability $1-o(n^{-100})$.
\end{proof}

The fixed-\(\theta\) estimate above implies a uniform estimate on \([0,K/n]\). Indeed, \(L_n(\theta)\) is \((n-1)\)-Lipschitz in \(\theta\), and \(-\theta\alpha n\) is \(\alpha n\)-Lipschitz. Hence \(\phi(\theta)\) is at most \(2n\)-Lipschitz. Take a grid of mesh \(n^{-4}\) in \([0,K/n]\), containing \(O(n^3)\) points. The fixed-\(\theta\) concentration bound, union-bounded over the grid, gives concentration at every grid point with probability \(1-o(n^{-90})\). Lipschitz continuity then gives $\sup_{0\leq \theta\leq K/n}|\phi(\theta)-\E\phi(\theta)|=o(1)$ w.h.p. Therefore \(\max_\theta\phi(\theta)=\max_\theta\E\phi(\theta)+o(1)\) w.h.p. Since the limiting value in Theorem \ref{th1} is bounded away from zero, this absolute $o(1)$ concentration implies $Z_\a\sim \E Z_\a$ w.h.p.

\section{Proof of Theorem \ref{th2}}
The proof here is written for the undirected case $K_n$, but the same argument will work for the complete digraph $\vec K_n$ if we use outgoing edges.
\subsection{A lower bound}
We prove that for every fixed $\eta<\lambda(\a)$,
\[
  \Pr\!\left(Z_\a\le \eta\frac{\log n}{n}\right)\longrightarrow 0.
\]
Let
\[
  t_n:=\eta\frac{\log n}{n},
\]
and let $N_n(\eta)$ be the number of paths from $1$ to $2$ with total weight at most $t_n$ and at most $L$ red edges. A fixed $k$-edge path has weight $\G_k$ where
\[
  \Pr\bigl(\Gamma_k\le t_n\bigr) =\int_0^{t_n}\frac{x^{k-1}e^{-x}}{(k-1)!}\,\dd x  \le \frac{t_n^k}{k!},
\]
where the inequality follows from $e^{-x} \le 1$. 
There are at most $ n^{k-1}$ paths of length $k$ from 1 to 2.  For such a path, the red-edge count has distribution $Bin(k,1/2)$, independently of the weights.  Therefore
\begin{align}
  \E N_n(\eta)  &\le  \sum_{k=1}^{n-1} n^{k-1}\frac{t_n^k}{k!} \Pr\bigl(Bin(k,1/2)\le L\bigr) \nn \\
  &=  \frac1n\sum_{k=0}^{\infty}\frac{(\eta\log n)^k}{k!}\sum_{\ell=0}^L\binom{k}{\ell}2^{-k}\nn\\
&= \frac1n\sum_{\ell=0}^L\frac{1}{\ell!}\bfrac{\eta\log n}{2}^{\ell}\sum_{k=\ell}^\infty\frac{(\eta\log n)^{k-\ell}}{2^{k-\ell}(k-\ell)!}\nn\\
&=\frac1{n^{1-\eta/2}}\sum_{\ell=0}^L\frac{1}{\ell!}\bfrac{\eta\log n}{2}^{\ell}.\label{final}
\end{align}
If $\eta\le 2\a<1$ then \eqref{final} gives
\[
  \E N_n(\eta)\le n^{\eta-1}\to0.
\]
It remains to consider $\eta\geq2\a$. Now the sum in \eqref{final} is dominated by its last term, so
\[
\frac1{n^{1-\eta/2}}\sum_{\ell=0}^L\frac{1}{\ell!}\bfrac{\eta\log n}{2}^{\ell}=O\brac{\frac1{n^{1-\eta/2}}\cdot\frac{1}{L!}\bfrac{\eta\log n}{2}^{L}}=O(n^{-1+\eta/2+\a(1+\log\eta-\log 2\a)})=o(1),
\]
since $\eta<\l(\a)$.
\subsection{An upper bound}
The idea here is to grow shortest path trees from 1 and 2, up to certain depth. Taking account of the coloring, we argue that each tree contains a large number of vertices whose path from the root has small cost and uses fewer than $L/2$ red edges. Then it is simple to argue that w.h.p. we can join two such vertices to make a short path from 1 to 2. 

Consider the analysis of Janson \cite{J2} in the uncolored case. Recall Dijkstra's algorithm DA.
After $k$ iterations there is a rooted {\em shortest path} tree $T_k$ with root $s=1$ such that if
$v$ is a vertex of $T_k$ then the tree path from $s$ to $v$ is a shortest path. Let $d(v)$ be its length.
For $x\notin T_k$ let $d(x)$ be the minimum length of a path $P$ that goes from $s$ to $v$ to $x$ where $v\in T_k$ and the sub-path of $P$ that goes to $v$ is the tree path from $s$ to $v$. If $d(y)=\min\set{d(x):x\notin T_k}$ then $d(y)$ is the length of a shortest path from $s$ to $y$ and $y$ can be added to the tree.

Suppose that vertices are added to the tree in the order $v_1=s,v_2,\ldots,v_n$ and that $Y_j=dist(v_1,v_j)$ for $j=1,2,\ldots,n$. It follows from the memoryless property of the exponential distribution\footnote{$\Pr(Z\geq A+t\mid Z\geq A)=\Pr(Z\geq t)$ for an exponential random variable $Z$}
\[
Y_{k+1} = \min_{\substack{i=1,2,\ldots,k \\{v\neq v_1,\ldots,v_k}}}[Y_i+ X_{v_i,v}]=Y_k + E_k
\]
where $E_k$, being the minimum of $k(n-k)$ independent exponentials, i.e. exponential with mean $\frac{1}{k(n-k)}$ and independent of $Y_k$. This is because $X_{v_i,v_j}$ is distributed as an independent exponential $Z$ conditioned on $Z\geq Y_k-Y_i$. Hence
\beq{ELn}{
\E Y_n = \sum_{k=1}^{n-1}\frac{1}{k(n-k)} = \frac{1}{n}\sum_{k=1}^{n-1}\left(\frac{1}{k}+ \frac{1}{n-k}\right) = \frac{2}{n}\sum_{k=1}^{n-1}\frac{1}{k} = \frac{2\log n}{n}+O(n^{-1}).
}
Now consider what changes when we color the edges. We apply the Dijkstra's algorithm (DA) ignoring the colors and produce a sequence of trees $T_1,T_2,\ldots,T_n$ where $T_k$ has exactly $k$ vertices. A vertex $v$ of $T_n$  is {\em valid} if the path from 1 to $v$ contains at most $L$ red edges after the edge colors are revealed.

Suppose that vertices are added to the tree in the order $v_1=s,v_2,\ldots,v_n$. Then if $i\leq k<j$, the edge cost $X_{v_i,v_j}$ is distributed as an independent exponential $Z_{i,j}$ conditioned on $Z_{i,j}\geq d(v_k)-d(v_i)$.  It follows from the memoryless property of the exponential distribution that $Z_{i,j}=d(v_k)-d(v_i)+E_{i,j}$ where the $E_{i,j}$ are independent exponential mean 1. Then,
\[
 \min_{1\leq i\leq k<j}(d(v_i)+Z_{i,j})=d_k+\min_{1\leq i\leq k<j}E_{i,j}.
\]
It follows that  when we add a new vertex it is connected to a random vertex in the existing tree.
\subsection{A colored Yule process}
We model the growth of the trees $T_1,T_2,\ldots,T_n$ as a {\em Yule process}. Consider a continuous-time branching process started from one root at time $0$.  Each particle lives forever and gives birth at rate 1. At each birth, the child is joined to its parent by an independently colored edge, red with probability $1/2$ and blue with probability $1/2$. In algorithm DA, the particles are the vertices, the root is 1 and when a child is born, it is born to a random parent. Denote the Yule tree constructed by time $t$ as $\cY(t)$.

In both processes when a vertex is added to a tree, it is connected to a uniform random neighbor. (Such trees are called {\em Random recursive trees} and have been well studied.) We are only going to use valid paths of $\cY(t)$. If $v$ is the $k$th vertex added to $\cY(t)$ and the path $P_k$ to $v$ is valid then the expected corresponding Dijkstra cost of $P_k$ is 
\beq{k(n-k)}{
\frac{1}{n}\sum_{i=1}^k\left(\frac{1}{i}+ \frac{1}{n-i}\right) \sim \frac{\log k}{n}+O\bfrac{1}{n},
}
if $k=o(n)$.

Meanwhile the $k$th birth of $\cY$ occurs at expected time $\sim\sum_{i=1}^k\frac{1}{i}\sim \log k$. This is because the time to the next birth when there are $i$ vertices in $\cY$ is the minimum of $i$ independent exponentials with mean $1$. Meanwhile, having explored $k$ vertices, the next increment in cost has distribution $EXP(k(n-k))$, so for $k = o(n)$ the scaled process $nd(v)$ has increments close to those of a rate-$k$ Yule process. The parent of the new vertex is uniform among the $k$ existing vertices, and the edge color is independent of the costs and of the past. Thus the rooted colored tree structure agrees with a colored random recursive tree up to $o(n)$ explored vertices, while distances are Yule times divided by $n$, up to a $1 + o(1)$ factor.

We will prove the following lemma in Section \ref{main1} below;
\begin{lemma}\label{mainshort}
Fix $a,\eta\in(0,1)$, $\gamma\in(0,a/2)$ and $s<h(a,\gamma)$ where
\[
h(a,\gamma)=\frac{a}{2}+\gamma+\gamma\log\!\left(\frac{a}{2\gamma}\right).
\]
In the complete graph $K_{n-o(n)}$ with independent $EXP(1)$  edge costs, run algorithm DA until $N=n^{1-\eta}$ vertices have been discovered. Then w.h.p. there are at least $N^s$ vertices $v\in T_N$ such that the path in $T_N$ from 1 to $v$ has total cost at most $\frac{\left(a+o(1)\right)\log N}{n}$ and uses at most $\gamma\log N$ red edges.
\end{lemma}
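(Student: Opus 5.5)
The approach is to work entirely in the colored Yule process $\cY(t)$ and transfer the result to algorithm DA at the end. By the discussion preceding the lemma, the $k$th cost increment of DA is $EXP(k(n'-k))$, where $n'=n-o(n)$. For $k\le N=n^{1-\eta}=o(n)$ this equals $\frac{1}{n'-k}EXP(k)$, so we may couple DA with $\cY$ so that the colored tree shapes coincide and the DA distance of each vertex equals $(1+o(1))/n$ times its Yule birth time. Run $\cY$ until time $t=a\log N$. Since $|\cY(t)|e^{-t}$ converges a.s. to an $EXP(1)$ variable, $|\cY(t)|\le N^{a}\log N<N$ w.h.p. (as $a<1$), so the whole of $\cY(t)$ lies inside $T_N$. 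It therefore suffices to show that w.h.p. $\cY(a\log N)$ contains at least $N^s$ vertices whose root path has at most $\g\log N$ red edges.

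\textbf{First moment.} In a rate-1 Yule process the expected number of vertices at depth $j$ at time $t$ is $t^j/j!$. Colors are independent of the tree, so the expected number of good vertices is
\[
\sum_{j}\frac{t^j}{j!}\Pr(Bin(j,1/2)\le r)=e^{t/2}\sum_{\ell\le r}\frac{(t/2)^\ell}{\ell!},\qquad r=\g\log N,
\]
which is the same manipulation as in \eqref{final}. Since $\g<a/2$, the last term dominates, and Stirling's formula gives $N^{h(a,\g)+o(1)}$. It is convenient to record the same computation for the number of good vertices at the \emph{exact} depth $j_0=\lfloor 2\g\log N\rfloor$ having exactly $r$ red edges. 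This count also has mean $N^{h-o(1)}$ (the optimizing depth is $j=2r$ with all-blue behaviour elsewhere), and restricting to it simplifies the second-moment step.

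\textbf{Lower bound w.h.p.} I would proceed in two stages, and this is the step I expect to be the main obstacle. First run $\cY$ for time $\e\log N$. W.h.p. this produces at least $N^{\e/2}$ vertices, and I keep only those whose root path has at most $\e\log N$ red edges, of which there are still $N^{\e/3}$. Next, each such vertex roots an independent Yule subtree, run for the remaining time $(a-\e)\log N$ with red budget $(\g-\e)\log N$. For a single subtree I would apply Paley--Zygmund to the count at fixed depth and fixed red number. Here the second moment is a sum over pairs of vertices according to the depth $i$ of their common ancestor, and each term factors as a first moment up to $i$ times two independent first moments afterwards. Because the target direction is subcritical, i.e.\ $h(a-\e,\g-\e)>0$, the exponential tilt is strictly inside the range where the additive martingale is $L^2$-bounded. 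Hence $\E X^2\le N^{o(1)}(\E X)^2$, and each subtree reaches $N^{h-O(\e)}$ good vertices with probability at least $N^{-o(1)}$. With $N^{\e/3}$ independent trials, at least one succeeds w.h.p. Choosing $\e$ small enough that $h-O(\e)>s$ finishes the argument. If the $L^2$ bound on the second moment turns out to be delicate, my fallback is to cite the Biggins/Kingman theorem for branching random walks in the two-dimensional displacement (birth time, red count), which gives the growth rate $N^{h+o(1)}$ almost surely on survival.

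Finally, translate back through the coupling. Birth time at most $a\log N$ becomes DA cost at most $(a+o(1))\log N/n$. The multiplicative error is $1+O(N/n)=1+o(1)$, which is uniform over the at most $N$ increments and holds w.h.p. by the concentration of sums of the exponentials.
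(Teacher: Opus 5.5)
The step that fails is your choice of local count. The expected number of Yule vertices at depth $j$ with exactly $r$ red edges at time $t$ is
\[
\frac{t^j}{j!}\binom{j}{r}2^{-j}=\frac{(t/2)^r}{r!}\cdot\frac{(t/2)^{j-r}}{(j-r)!}.
\]
So the number of blue edges on a good path is Poisson$(t/2)$-weighted, and the optimal depth is $j\approx r+t/2=(\gamma+a/2)\log N$, not $2r$. At your depth $j_0=\lfloor 2\gamma\log N\rfloor$ the mean is $N^{2\gamma(1+\log x)+o(1)}$ with $x=a/(2\gamma)>1$. This is smaller than $N^{h(a,\gamma)}$ by the factor $N^{-\gamma(x-1-\log x)}$.

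Paley--Zygmund applied to that count therefore yields only the exponent $2\gamma(1+\log x)-O(\epsilon)$, not an arbitrary $s<h(a,\gamma)$. This matters in practice. For the parameters of Theorem~\ref{th2} with $\alpha=1/4$ (so $a\approx 0.55$, $\gamma=1/8$), this exponent is about $0.45$, below the $1/2$ that the joining step needs. The fix is to use $j_0=r+\lfloor t/2\rfloor$, whose mean is $N^{h-o(1)}$. In the second stage, replace $t,r$ by $(a-\epsilon)\log N$ and $(\gamma-\epsilon)\log N$.

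Your justification of $\E X^2\le N^{o(1)}(\E X)^2$ must also be replaced. A positive growth exponent only gives uniform integrability of the additive martingale ($L^p$-boundedness for some $p>1$), not $L^2$. Here is what is actually true. Let $T_u$ be the birth time and $\psi(\theta)=(1+e^{-\theta})/2$. The generation-one transform is $\Lambda(\beta,\theta)=\log\E\sum_{|u|=1}e^{-\beta T_u-\theta R_u}=\log(\psi(\theta)/\beta)$. For the corrected target, take $e^{-\theta}=2r/t$ and $\beta=\psi(\theta)$. Then $\Lambda(\beta,\theta)=0$ and $e^{\beta t+\theta r}=e^{t/2+r+r\log(t/2r)}=N^{h}$. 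Bound each post-split first moment by $e^{\beta(t-T_c)+\theta(r-R_c)}$ and sum over the split depth $i$:
\[
\E[X(X-1)]\le N^{2h}\sum_{i\ge 0}\Bigl(\frac{\psi(2\theta)}{2\psi(\theta)}\Bigr)^i=O(N^{2h})=O\bigl((\log N)^2(\E X)^2\bigr).
\]
This uses $\psi(2\theta)<2\psi(\theta)$, the same inequality the paper uses for $L^2$-boundedness of $M_\theta$.

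With these two corrections, the rest of your argument (the $N^{\epsilon/3}$ independent trials and the coupling back to DA) goes through. It is then a genuinely different route from the paper's, which never fixes a depth or computes a local second moment. The paper instead uses the a.s.\ positive limit of $M_\theta(t)=e^{-\psi(\theta)t}\sum_u e^{-\theta R_u}$ to get $S_\theta(a\log N)\ge N^{a\psi(\theta)-o(1)}$ w.h.p.\ directly. It then applies exponential Chebyshev bounds at $\theta\pm u$ to show this weight sits on vertices with $R_v\in[\bar\gamma\pm\delta]\log N$. This removes the need for your amplification stage.
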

Assume the truth of Lemma \ref{mainshort} with $\eta>0$ is sufficiently small.  We choose $a=\l/2+\e,\g=\a/2$ where $\e>0$ is sufficiently small. We argue below that for some $c_1>0$, we have
\beq{hag}{
h(a,\g)\geq \frac12+c_1\e+O(\e^2).
}
Given \eqref{hag}, choose $s,\eta$ so that $1/2 < s(1-2\eta) <s< h(a, \g)$ and fix a set $W$ of size $n^{3/2-s}$. Apply Lemma \ref{mainshort} and run DA on $K_n\sm W$ from vertex 1 until we have explored $N=n^{1-\eta}$ vertices and found $N^s=n^{(1-\eta)s}$ valid vertices $V_1$ as promised by Lemma \ref{mainshort}. Now do the same from vertex 2, running DA as if on $K_{n-N}\sm W$, avoiding the $N=o(n)$ vertices examined for vertex 1 until we obtain $N^{s}$ valid vertices $V_2$. Note that the costs of the edges between $V_1,V_2$ and $W$ are unconditioned.  If there is a path $P=(w_1\in V_1,w\in W,w_2\in V_2)$ using blue edges of cost at most $1/n$ then concatenating the path from 1 to $w_1$ in $V_1$ and the path $P$ and the path from $w_2$ to 2 in $V_2$ gives us (after possibly removing cycles) a path of cost at most $(\l+2\e+o(1))\log n/n$ containing at most $\a\log n$ red edges. Now the probability that there is no such path $P$ can be bounded
\[
(1-(1-(1-\tfrac12(1-e^{-1/n}))^{n^{(1-\eta)s}})^2)^{n^{3/2-s}}=e^{-\Omega(n^{(1-2\eta)s-1/2})}=o(1).
\]
Since $\e$ is arbitrary, Theorem \ref{th2} follows.
\paragraph{Verification of \eqref{hag}}
We first observe that from the definition of $\a$, we have
\[
h\brac{\frac\l2,\frac\a2}=\frac12\brac{\frac{\l}2+\a+\a\log\bfrac{\l}{2\a}}=\frac12.
\]
Then we observe that
\beq{h+}{
\nabla h\brac{\frac\l2,\frac\a2}=\brac{\frac12+\frac\a\l,\log\bfrac\l{2\a}}>0.
}
since $\l>2\a$. (If $\l\leq 2\a$ then its definition implies that $\l/2\geq 1-\a>\a$, contradiction.)
\subsection{Proof of Lemma \ref{mainshort}: Structure of $\cY(t)$}\label{main1}

 We analyze $\cY(t)$, proving the analog of Lemma \ref{mainshort} for the Yule process, which we now state. 
\begin{lemma}\label{mainshort2}
Fix $a\in(0,1)$, $\gamma\in(0,a/2)$ and $s<h(a,\gamma)$ where
\[
h(a,\gamma)=\frac{a}{2}+\gamma+\gamma\log\!\left(\frac{a}{2\gamma}\right).
\]
In the Yule process, the following holds w.h.p. There are at least $N^s$ particles $v$ such that the path  from the root to $v$ appears by time $\left(a+o(1)\right)\log N$ and uses at most $\gamma\log N$ red edges.
\end{lemma}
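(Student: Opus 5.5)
The plan is a first-moment computation that identifies $h(a,\g)$ as the right exponent, followed by a lower bound obtained by embedding a supercritical Galton--Watson process into $\cY(t)$. The survival probability of that process is then boosted to $1-o(1)$ by seeding it from many independent particles.

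\emph{First moment.} In the Yule process the expected number of particles at depth $k$ born by time $t$ is $t^k/k!$: the depth-$k$ birth times are sums of $k$ independent $EXP(1)$ variables, and the Yule tree has rate-1 births. The red count on a depth-$k$ path is $Bin(k,1/2)$, independent of the times. So, exactly as in \eqref{final}, the expected number of particles born by time $t$ with at most $\ell$ red edges is
\[
\sum_k \frac{t^k}{k!}\Pr(Bin(k,1/2)\le \ell)=e^{t/2}\sum_{j\le \ell}\frac{(t/2)^j}{j!}.
\]
Take $t=a\log N$ and $\ell=\g\log N$. Since $\g<a/2$, the sum is dominated by its last term, and Stirling gives expectation $N^{h(a,\g)+o(1)}$. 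The dominant contribution comes from depth $k\approx (a/2+\g)\log N$: a path with $\ell=\g\log N$ red edges (terms $(t/2)^\ell/\ell!$) and about $(a/2)\log N$ blue edges (from $e^{t/2}$). This identifies the exponent $h$ but gives only an upper bound; the lower bound needs concentration.

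\emph{Embedded Galton--Watson process.} Fix a large constant $T$ and split $[0,a\log N]$ into blocks of length $T$. For a particle born at time $\tau$, its \emph{children} in the embedded process are the descendants $w$ with the following properties:
\begin{itemize}
\item[(i)] $w$ is born in $(\tau,\tau+T]$;
\item[(ii)] the relative depth $m$ of $w$ satisfies $|m-(a/2+\g)T/a|\le \d T$;
\item[(iii)] the number $r$ of red edges from the particle to $w$ satisfies $r\le \g T/a$.
\end{itemize}
Condition (iii) is chosen so that after $(\log N)/T\cdot a$ blocks the total red count stays below $\g\log N$ and the total time below $a\log N$. By the strong Markov property and independence of subtrees, this defines a Galton--Watson process. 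Its offspring mean, by the same computation as above with $t=T$, is $\mu_T=e^{T(h(a,\g)/a)-o_\d(T)-O(\log T)}$, and its offspring variance is finite because the Yule population at time $T$ is geometric. Kesten--Stigum (or simply a direct second-moment bound for GW processes with finite variance) gives that on survival $Z_j\ge \mu_T^j/j$ for all large $j$, with positive probability $q_T>0$. After $j=a\log N/T$ generations, $\mu_T^j=N^{h-O(\d)-O(\log T/T)}\ge N^{s}$, once $T$ is large and $\d$ is small relative to $h-s$.

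\emph{Boosting and the main obstacle.} The remaining point, which I expect to be the main technical step, is to upgrade survival with probability $q_T$ to w.h.p. without spending the exponent budget. I would spend an initial time $\e\log N$ growing the Yule tree. Its size is geometric with mean $N^{\e}$, so w.h.p. there are at least $N^{\e/2}$ particles. At most $\e\log N$ red edges are consumed on their root paths, since every path has at most $\e\log N$ edges w.h.p. (depth of a random recursive tree is $\sim e\cdot$ time; take a constant multiple of $\e$ instead). Each of these particles starts an independent copy of the embedded process run with slightly reduced parameters $a-\e'$ and $\g-\e'$, chosen so that the totals stay within $a\log N$ and $\g\log N$. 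Each copy succeeds independently with probability at least $q_T/2$, so the probability that all fail is $(1-q_T/2)^{N^{\e/2}}=o(1)$. By continuity of $h$, the exponent achieved is still above $s$ once $\e,\e'\to0$, and the $(a+o(1))$ slack in the statement absorbs the remaining error terms. The delicate parts are checking the continuity and dominance estimates uniformly as $T$ grows, and verifying that the block constraint (ii) costs only $o_\d(T)$ in the exponent; the latter is a local large-deviation estimate for the Poisson-like depth distribution $t^k/k!$.
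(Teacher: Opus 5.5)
Your argument works after one repair (noted below), and it takes a genuinely different route from the paper.

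\textbf{How the paper argues.} The paper does not discretize time or embed a Galton--Watson process. It uses the one-parameter family of additive martingales $M_\theta(t)=e^{-\psi(\theta)t}\sum_{u\in\cY(t)}e^{-\theta R_u}$, with $\psi(\theta)=(1+e^{-\theta})/2$, and proves they are $L^2$-bounded with almost surely positive limits. It then tilts exponentially with $\theta=\log(a/2\bar\gamma)$, where $\bar\gamma<\gamma$ is chosen so that $h(a,\bar\gamma)>s$. The weighted mass $\sum_v e^{-\theta R_v}$ at time $a\log N$ is $N^{a\psi(\theta)+o(1)}$. Comparing $M_\theta$ with $M_{\theta\pm u}$ shows this mass is carried by particles with $R_v\in[\bar\gamma\pm\delta]\log N$. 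Dividing by the largest weight in that window gives at least $N^{a\psi(\theta)+\theta(\bar\gamma-\delta)-o(1)}\ge N^s$ such particles. The exponent $a\psi(\theta)+\theta\bar\gamma=h(a,\bar\gamma)$ is exactly the Chernoff optimization of your first-moment count.

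\textbf{Comparison.} The paper gets the w.h.p.\ statement for free: the Yule process cannot die out and $M_\theta(\infty)>0$ a.s., so no boosting is needed. It also involves a single small parameter $\delta$, rather than your $T\to\infty$, $\delta\to0$, $\epsilon,\epsilon'\to0$. Your route is more elementary and makes the extremal paths visible (about $(a/2)\log N$ blue and $\gamma\log N$ red edges). The price is the block bookkeeping, Kesten--Stigum, and the boosting step. The boosting step is fine provided the copies are started at time $\epsilon\log N$ from the particles then alive, using only later births. By the branching property they are then independent Yule processes on disjoint sets of particles.

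\textbf{The step that needs repair.} With the depth \emph{window} in (ii), two embedded children of the same particle can be in ancestor--descendant relation. For instance, $w_2$ could be a blue child of $w_1$ born shortly after $w_1$, both within time $T$ of their embedded parent. Their subtrees then overlap, their offspring counts are dependent, and a grandchild can be counted twice. So the embedded process is not Galton--Watson, and $Z_j$ may overcount distinct particles.

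\textbf{The fix.} Use an exact relative depth $m=\lfloor(1/2+\gamma/a)T\rfloor$ instead of the window. Then every embedded generation is an antichain of the tree. The subtrees hanging from it are disjoint and, by the strong Markov and branching properties at the birth times, i.i.d.\ Yule processes. The offspring mean is $\sum_{r\le\gamma T/a}\frac{(T/2)^r}{r!}\cdot\frac{(T/2)^{m-r}}{(m-r)!}$, which is at least $e^{Th(a,\gamma)/a-O(\log T)}$. Restricting to one depth therefore costs only $O(\log T/T)$ in the exponent, and the rest of your argument goes through unchanged.
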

Lemma \ref{mainshort2} implies  Lemma \ref{mainshort}. Indeed, we can couple the DA exploration process up to step $N$ with the Yule process, where to calculate the DA costs from the Yule costs we divide by $n'=n-o(n)$. In the DA process, at step $k$ the next vertex arrival is distributed as $EXP(k(n'-k))$ and for $k \le N = o(n)$ we have $k(n'-k) \sim kn$. 

For $\theta>0$ define
\[
  S_\theta(t):=\sum_{u\in\mathcal Y(t)} e^{-\theta R_u},  \qquad  \psi(\theta):=\frac{1+e^{-\theta}}{2},
  \qquad  M_\theta(t):=e^{-\psi(\theta) t}S_\theta(t),
\]
where $R_u$ denotes the number of red edges in the path from the root to $u$ in $\cY(t)$.
\begin{lemma}\label{lem:yule_mart}
For every fixed $\theta>0$, $(M_\theta(t))_{t\ge0}$ is a nonnegative martingale, bounded in $L^2$, and it converges almost surely and in $L^2$ to a finite limit $M_\theta(\infty)$ satisfying
\[
  \Pr(M_\theta(\infty)>0)=1.
\]
Consequently, for every fixed $a>0$,
\[
  \frac{\log M_\theta(a\log N)}{\log N}\xrightarrow{\Pr}0.
\]
\end{lemma}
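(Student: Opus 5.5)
The plan is to verify the martingale property with the infinitesimal generator of the colored Yule process. Then I will control the quadratic variation to get $L^2$-boundedness. Finally, a one-step branching decomposition will give $\Pr(M_\theta(\infty)>0)=1$.

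\emph{Martingale property.} In $\cY(t)$ each particle $u$ gives birth at rate $1$. The child $v$ has $R_v=R_u+B$, where $B$ is an independent Bernoulli$(1/2)$ color indicator. A birth from $u$ therefore increases $S_\theta$ by $e^{-\theta R_u}e^{-\theta B}$, whose conditional mean is $e^{-\theta R_u}\psi(\theta)$. Summing over $u$, the drift of $S_\theta(t)$ is $\psi(\theta)S_\theta(t)\,\dd t$. Since $e^{-\psi(\theta)t}$ exactly cancels this drift, $M_\theta$ is a local martingale, and it is clearly nonnegative. Integrability is immediate: $S_\theta(t)\le|\cY(t)|$ and $\E|\cY(t)|=e^t$. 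Hence $M_\theta$ is a true martingale, and in particular $\E S_\theta(t)=e^{\psi(\theta)t}$.

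\emph{$L^2$ bound.} $M_\theta$ is a pure-jump martingale. A birth from $u$ at time $t$ produces a jump of size $e^{-\psi(\theta)t}e^{-\theta(R_u+B)}$. Its predictable quadratic variation therefore has density
\[
e^{-2\psi(\theta)t}\sum_{u\in\cY(t)}\E\, e^{-2\theta(R_u+B)}=e^{-2\psi(\theta)t}\,\psi(2\theta)\,S_{2\theta}(t).
\]
Applying the first step with $2\theta$ in place of $\theta$ gives $\E S_{2\theta}(t)=e^{\psi(2\theta)t}$. Consequently
\[
\E M_\theta(t)^2 = 1+\int_0^t \psi(2\theta)e^{(\psi(2\theta)-2\psi(\theta))s}\,\dd s .
\]
This stays bounded as $t\to\infty$ because $\psi(2\theta)=\tfrac{1+e^{-2\theta}}{2}<1+e^{-\theta}=2\psi(\theta)$. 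The martingale convergence theorem then gives a.s.\ and $L^2$ convergence to $M_\theta(\infty)$. By $L^2$ (hence $L^1$) convergence, $\E M_\theta(\infty)=1$.

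\emph{Positivity.} This is the step needing an idea beyond computation, and I would use the first-birth decomposition. Let $\tau\sim EXP(1)$ be the time of the root's first birth, and let $B$ be that child's color. After $\tau$ the process splits into two independent colored Yule processes: one rooted at the root, and one rooted at the child with all red counts shifted by $B$. Letting $t\to\infty$ gives
\[
M_\theta(\infty)=e^{-\psi(\theta)\tau}\bigl(M'(\infty)+e^{-\theta B}M''(\infty)\bigr),
\]
where $M',M''$ are independent copies of $M_\theta(\infty)$ and are independent of $(\tau,B)$. The prefactors are strictly positive, so $q:=\Pr(M_\theta(\infty)=0)$ satisfies $q=q^2$, i.e.\ $q\in\{0,1\}$. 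Since $\E M_\theta(\infty)=1$, we cannot have $q=1$, so $q=0$.

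\emph{Consequence.} As $N\to\infty$ we have $a\log N\to\infty$, so $M_\theta(a\log N)\to M_\theta(\infty)$ a.s. The limit lies in $(0,\infty)$ a.s., so $\log M_\theta(a\log N)$ is tight. Dividing a tight sequence by $\log N\to\infty$ gives convergence to $0$ in probability.
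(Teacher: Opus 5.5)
Your proposal is correct and follows the paper's proof: the same generator/drift computation for the martingale property, the same second-moment identity (you phrase it through the predictable quadratic variation rather than an ODE for $\E S_\theta(t)^2$), and a branching fixed-point argument for positivity. The only variation is that you split at the root's first birth time, which gives $q=q^2$ directly, whereas the paper splits at a fixed time $T$ and needs the strict inequality $\E(q^{|\cY(T)|})<q$ for $0<q<1$.
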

\begin{proof}
The martingale property follows directly from the generator $\cal L$ of the Yule branching process. $\cal L$ acts on a function of the current tree by computing its instantaneous expected rate of change. If a particle \(u\) has red-depth \(R_u=r\), then a blue child is born at rate \(\rho=1/2\) and contributes \(e^{-\theta r}\) to \(S_\theta\), while a red child is born at rate \(\rho\) and contributes \(e^{-\theta(r+1)}\).  Hence
\[
\mathcal L S_\theta=\rho\sum_{u\in\mathcal Y(t)}\left(e^{-\theta R_u}+e^{-\theta(R_u+1)}\right)=\rho(1+e^{-\theta})S_\theta=\psi(\theta)S_\theta.
\]
Therefore, the instantaneous expected rate of change for \(F(t,\cY(t))=M_\th(t)\) is
\[
(\partial_t+\mathcal L)F(t,\cY(t))=-\psi(\theta)e^{-\psi(\theta)t}S_\theta+e^{-\psi(\theta)t}\mathcal L S_\theta=0.
\]
(Here $\partial_t$ is the deterministic contribution due to the factor $e^{-\psi(\theta)t}$ and $\cal L$ is due to the stochastic drift.) 

We use Dynkin's formula \cite{Dyn}  in its time-dependent form.  If \(X_t\) is a continuous-time Markov process with generator \(\mathcal L\) and $F$ is a suitable function, then
\[
\E\left(F(t,X_t)\mid\mathcal F_s\right)=F(s,X_s)+\E\left(\int_s^t(\partial_r+\mathcal L)F(r,X_r)\,dr\mid\mathcal F_s\right).
\]
In particular, if \((\partial_t+\mathcal L)F(t,X_t)=0\), then \(F(t,X_t)=M_\theta(t)\) is a martingale.  

For a second moment calculation, observe that a birth increases the quantity $S_\theta(t)^2$ by $2S_\theta(t)Y+Y^2$.  We have
\[
 \E(Y^2)=\r\sum_{u\in \cY(t)}(e^{-2\th R_u}+e^{-2\th(R_u+1)})= \r S_{2\theta}(t)+\r e^{-2\theta}S_{2\theta}(t) = \psi(2 \th) S_{2 \th}(t).
\]
Thus
\[
  \frac{\dd}{\dd t}\E(S_\theta(t)^2) =2\psi(\theta)\E(S_\theta(t)^2)+\psi(2\th)\E S_{2\th}(t)=2\psi(\theta)\E(S_\theta(t)^2) +\psi(2\theta)e^{\psi(2\theta)t}.
\]
Because $\psi(2\theta)<2\psi(\theta)$ for $\theta>0$, integrating gives us
\[
 \E(S_\theta(t)^2) =e^{2\psi(\th)t}\brac{1+\psi(2\th)\int_0^te^{(\psi(2\th)-2\psi(\th))s}\dd s}\leq e^{2\psi(\th)t}\brac{1+\frac{\psi(2\theta)}{2\psi(\theta)-\psi(2\theta)}} .
\]
So,
\[
  \E(M_\theta(t)^2)  =e^{-2\psi(\theta)t}\E(S_\theta(t)^2)  \le  1+\frac{\psi(2\theta)}{2\psi(\theta)-\psi(2\theta)}<\infty
\]
uniformly in $t$.  Thus, by the Martingale Convergence Theorem, $M_\theta(t)$ converges almost surely and in $L^2$ to a limit $M_\theta(\infty)$. Note that $\E M_\theta(\infty)=\E M_\th(0)=1$.

It remains only to check that this limit is almost surely positive.  Fix $T>0$.  By the branching property,
\[
  M_\theta(\infty)
  =\sum_{u\in\mathcal Y(T)} e^{-\theta R_u-\psi(\theta)T}M^{(u)}_\theta(\infty),
\]
where, conditional on the process up to time $T$, the random variables $M^{(u)}_\theta(\infty)$ are independent copies of $M_\theta(\infty)$.  Let $q:=\Pr(M_\theta(\infty)=0)$.  Since all coefficients in the last sum are positive,
\[
  q=\E(q^{|\mathcal Y(T)|}).
\]
The Yule population satisfies $|\mathcal Y(T)|\ge1$ almost surely and $\Pr(|\mathcal Y(T)|>1)>0$.  Hence, for $0<q<1$, one has $\E( q^{|\mathcal Y(T)|})<q$.  Also $q=1$ is impossible because $\E M_\theta(\infty)=1$.  Therefore $q=0$.

The logarithmic statement follows immediately from almost sure convergence to a finite positive limit.
\end{proof}
Given the above lemma, we can give the proof of Lemma \ref{mainshort2}.

Since $\partial h(a,\gamma)/\partial\gamma=\log(a/2\gamma)>0$, we may choose $\bar\gamma<\gamma$ such that $s<h(a,\bar\gamma)$. Choose $\delta>0$ such that $0<\bar\gamma-\delta<\bar\gamma+\delta<\gamma$. Then with $\d$ sufficiently small and $\theta=\log\left(a/2\bar\gamma\right)>0$, we have $a\psi(\theta)+\theta(\bar\gamma-\delta)>s$. This is because 
\[
  a\psi(\theta)+\theta\bar\gamma  =\frac a2+\bar\gamma+\bar\gamma\log\!\left(\frac{a}{2\bar\gamma}\right)
  =h(a,\bar\gamma).
\]
Let $t=a\log N$, then for any $u>0$,
\[
  \sum_{R_v<(\bar\gamma-\delta)\log N} e^{-\theta R_v}\le  e^{u(\bar\gamma-\delta)\log N}\sum_{v\in\cY(t)} e^{-(\theta+u)R_v} =
  \exp\set{(u(\bar\gamma-\delta)+a\psi(\theta+u))\log N}  M_{\theta+u}(t).
\]
The derivative with respect to $u$ at $u=0$ of $u(\bar\gamma-\delta)+a\psi(\theta+u)-a\psi(\theta)$ is $  \bar\gamma-\delta-ae^{-\theta}/2=\bar\gamma-\delta-\bar\gamma =-\delta$. Thus, for some small fixed $u>0$ and some $\kappa>0$,
\[
  u(\bar\gamma-\delta)+a\psi(\theta+u)
  \le a\psi(\theta)-\kappa.
\]
By Lemma~\ref{lem:yule_mart}, $M_{\theta+u}(t)\le e^{\kappa\log N/2}$ w.h.p.  Therefore w.h.p. 
\[
  \sum_{R_v<(\bar\gamma-\delta)\log N} e^{-\theta R_v}\le  \exp\!\left((a\psi(\theta)-\kappa/2)\log N\right).
\]
Analogously, choose $0<u<\theta$.  Then
\[
  \sum_{R_v>(\bar\gamma+\delta)\log N} e^{-\theta R_v}\le  e^{-u(\bar\gamma+\delta)\log N}\sum_v e^{-(\theta-u)R_v} = \exp\set{(-u(\bar\gamma+\delta)+a\psi(\theta-u))\log N}  M_{\theta-u}(t).
\]
The derivative with respect to $u$ at $u=0$ of $  -u(\bar\gamma+  \delta)+a\psi(\theta-u)-a\psi(\theta)$ is $-\bar\gamma-\delta+ae^{-\theta}/2=-\delta$. So, after possibly decreasing $u$, w.h.p.,
\[
 \sum_{R_v>(\bar\gamma+\delta)\log N} e^{-\theta R_v}\leq  \exp\set{(a\psi(\theta)-\kappa/2)\log N}.
\]
On the other hand, Lemma~\ref{lem:yule_mart} gives that w.h.p.,
\[
  \sum_{v\in\cY(t)} e^{-\theta R_v}=e^{\psi(\theta)t}M_\theta(t)  \ge \exp\set{(a\psi(\theta)-o(1))\log N}.
\]
Therefore w.h.p., 
\beq{window}{
\sum_{R_v\in[\bar\g\pm\d]\log N}e^{-\theta R_v}\geq  \exp\set{(a\psi(\theta)-o(1))\log N}.
}
Consequently, the number of particles in the sum on the LHS of \eqref{window} must be at least
\[
  \exp\set{(a\psi(\theta)+\theta(\bar\gamma-\delta)-o(1))\log N}\geq N^s.
\]
 Lemma \ref{mainshort2} now follows since $\bar\g+\d\leq\g$.
\section{Proof of Theorem \ref{th3}}
In this section, we proceed as for spanning trees and consider a dual problem. Thus let $X$ denote the set of perfect matchings of $K_{n,n}$. Our optimization problem is to compute 
\begin{align}
C^*=\text{Minimum:}&\hspace{0.25in} C(M)\label{matchings}\\
\text{Subject to:}&\hspace{0.25in} M\in X\nn\\
&\hspace{0.25in}|E_R\cap M|\leq \a n.\label{M1}
\end{align}
We analyse this through Lagrangian relaxation. Define the dual function $\f(\th)$ for $\th\geq 0$ by
\beq{eqn:phidef3}
{
\f(\th)=-\th\a n+\min_{M\in X}\sum_{e\in M}(X_e+\th1_{e \in E_R}).
}
Let $Z_e, e \in E(K_{n, n})$ be i.i.d. exponential mean 1, and let $\wh Z_e$ be i.i.d. Bernoulli $1/2$. For each $e\in E(K_{n,n})$ set 
\beq{eqn:Wdef}
{
W_{\th,e} = Z_e + \theta \wh Z_e\text{ and }L_n(\th) = \min_{M \in X} W_\th(M)\text{ where }W_\th(M)=\sum_{e \in M} W_{\th,e}.
}
Then
\[
\E\f(\th)=-\th\a n+\E L_n(\th).
\]
Consider the complete bipartite graph $K_{n,n}$, with bipartition $(A,B)$, where $A=\{a_1, a_2,\ldots, a_n\}$ and $B=\{b_1, b_2, \ldots, b_n\}$, and with edge weights which are independent copies of $W_{\th,e}$. We assume that $a_1,a_2,\ldots,a_n$ is a random permutation of $A$. So $A_r=\set{a_1,a_2,\ldots,a_r}$ is a random $r$-subset of $A$. 

Let $EXP(\l)$ denote an exponential random variable of rate $\lambda$ i.e. $\Pr(EXP(\lambda)\geq x)=e^{-\lambda x}$. We add a special vertex $b^*$ to $B$, with edges to all $n$ vertices of $A$. Each edge adjacent to $b^*$ is assigned an $EXP(\lambda)$ weight independently, $\lambda>0$. 

For $r\geq 1$ we let $M_r$ be the minimum weight matching of $A_r$ into $B$ and $M_r^*$  be the minimum weight matching of $A_r$ into $B^*=B\cup \set{b^*}$. (As $\lambda\to 0$ it becomes increasingly unlikely that any of the extra edges are actually used in the minimum weight matching.) We denote this matching by $M_r^*$ and we let $B_r^*$ denote the corresponding set of vertices of $B^*$ that are covered by $M_r^*$. We let $C(n,r)$ denote the weight of $M_r$.

Define $P(n, r)$ as the normalized probability that $b^*$ participates in $M_r^*$, i.e.
\begin{equation}
 P(n, r) = \lim_{\lambda \rightarrow 0^+} \frac{\Pr(b^*\in B_r^*)}{\lambda}.
\end{equation}
Its importance lies in the following lemma:
\begin{lemma}\label{lem10a}
\begin{equation}
\E(C(n, r) - C(n, r-1)) = \frac{P(n, r)}{r}.
\end{equation}
\end{lemma}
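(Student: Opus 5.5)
The plan is to follow Wästlund's argument for the random assignment problem, using only the fact that the edges at $b^*$ carry exponential weights; the edges of $K_{n,n}$ carry the non-exponential weights $W_{\th,e}$, and no distributional property of these is needed. Condition on all the weights $W_{\th,e}$ and on the permutation of $A$, so that only the $n$ weights $w_a\sim EXP(\l)$ on the edges $\{a,b^*\}$ stay random. For $a\in A_r$, let $C(A_r\sm\{a\})$ be the minimum weight of a matching of $A_r\sm\{a\}$ into $B$, and set
\[
\Delta_a:=C(n,r)-C(A_r\sm\{a\})\ \ge 0 .
\]
A matching of $A_r$ into $B^*$ either avoids $b^*$, with best weight $C(n,r)$, or uses the edge $\{a,b^*\}$ for exactly one $a\in A_r$, with best weight $w_a+C(A_r\sm\{a\})$. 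Hence, ignoring ties (which have probability zero), $b^*\in B_r^*$ if and only if $w_a<\Delta_a$ for some $a\in A_r$.

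First I bound $\Pr(b^*\in B^*_r)$ from both sides. The events $\{w_a<\Delta_a\}$ are independent given the conditioning, so inclusion--exclusion gives
\[
\sum_{a\in A_r}(1-e^{-\l\Delta_a})-\sum_{a\neq a'}(1-e^{-\l\Delta_a})(1-e^{-\l\Delta_{a'}})\ \le\ \Pr(b^*\in B_r^*\mid\cdot)\ \le\ \sum_{a\in A_r}(1-e^{-\l\Delta_a}).
\]
Divide by $\l$ and use $1-e^{-\l x}\le \l x$ together with $(1-e^{-\l x})/\l\to x$. The upper sum, divided by $\l$, converges to $\sum_a\Delta_a$ and is dominated by it. The pair sum, divided by $\l$, is at most $\l\sum_{a\ne a'}\Delta_a\Delta_{a'}$.

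The main obstacle is justifying this limit after taking expectations, i.e.\ checking the integrability needed for dominated convergence. I would bound $\Delta_a\le C(n,r)$ and bound $C(n,r)$ by the weight of one fixed matching, which has finite second moment since each $W_{\th,e}\le Z_e+\th$. Then $\E\sum_a\Delta_a<\infty$ and $\E\sum_{a\ne a'}\Delta_a\Delta_{a'}<\infty$, so the second-order term is $O(\l)$ in expectation and vanishes. Dominated convergence therefore gives
\[
P(n,r)=\E\sum_{a\in A_r}\Delta_a .
\]

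Finally I use the symmetry from the random ordering of $A$. Since $A_r$ is a uniformly random $r$-subset of $A$, removing a uniformly random element $a\in A_r$ leaves a uniformly random $(r-1)$-subset. That set has the same distribution as $A_{r-1}$, and the weights are independent of the ordering, so $\E\, C(A_r\sm\{a\})=\E\, C(n,r-1)$. Summing over the $r$ choices of $a$ gives
\[
P(n,r)=r\,\E\bigl(C(n,r)-C(n,r-1)\bigr),
\]
which is Lemma~\ref{lem10a}. For $r=1$ we use $C(n,0)=0$, and the same computation applies.
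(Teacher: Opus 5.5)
Your proof is correct and follows essentially the same route as the paper. Both arguments characterize $b^*\in B_r^*$ by $w_a<C(n,r)-C(A_r\setminus\{a\})$ for some $a\in A_r$ and discard, at cost $O(\lambda^2)$, the event that two $b^*$-edges are short; your Bonferroni lower bound plays the role of the paper's bound $n^2\lambda^2\E(X^2)$. Both then pass to the limit $\lambda\to0^+$ and use exchangeability to identify $\E\, C(A_r\setminus\{a\})$ with $\E\, C(n,r-1)$, and your dominated-convergence justification of the limit interchange is more explicit than the paper's.
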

\begin{proof}
Choose $i$ randomly from $[r]$ and let $\widehat{B}_i\subseteq B_r$ be the $B$-vertices in the minimum weight matching of $(A_r\setminus\set{a_i})$ into $B^*$. Let $X=C(n,r)$ and let $Y=C(n,r-1)$. Let $w_i$ be the weight of the edge $(a_i, b^*)$, and let $I_i$ denote the indicator variable for the event that the minimum weight of an  $A_{r}$ matching that contains this edge is smaller than the minimum weight of an $A_{r}$ matching that does not use $b^*$. We claim that $\Pr(I_i)= \Pr(Y_i + w_i < X) + O(\l^2)$, where $Y_i$ is the minimum weight of a matching from $A_r\setminus \set{a_i}$ to $B$. 
Indeed, if $(a_i, b^*) \in M_r^*$ then $w_i < X - Y_i$. Conversely, if $w_i < X - Y_i$ and no other edge from $b^*$ has weight smaller than $X - Y_i$, then $(a_i, b^*)\in M_r^*$, and when $\lambda \to 0$, the probability that there are two distinct edges from $b^*$ of weight smaller than $X - Y_i$ is of order $O(\lambda^2)$. Indeed, let ${\mathcal F}$ denote the existence of two distinct edges from $b^*$ of weight smaller than $X$ and let ${\mathcal F}_{i,j}$ denote the event that $(a_i,b^*)$ and $(a_j,b^*)$ both have weight smaller than $X$. 

Then, 
\beq{no2}{
\Pr({\mathcal F})\leq n^2\E_X(\max_{i,j}\Pr({\mathcal F}_{i,j}\mid X))=n^2\E((1-e^{-\lambda X})^2)\leq n^2\lambda^2\E(X^2),
}
and since $\E(X^2)$ is finite and independent of $\lambda$, this is $O(\lambda^2)$.

Note that $Y$ and $Y_i$ have the same distribution. They are both equal to the minimum weight of a matching of a random $(r-1)$-set of $A$ into $B$. As a consequence, $\E(Y)=\E(Y_i)=\frac{1}{r}\sum_{j\in A_r}\E(Y_j)$. Since $w_i$ is $EXP(\lambda)$ distributed, as $\lambda\to 0$ we have from \eqref{no2} that 
\mults{
P(n,r)= \lim_{\lambda \rightarrow 0} \brac{\frac{1}{\lambda}\sum_{j\in A_r}\Pr(w_j<X-Y_j)+O(\lambda)}=\\
\lim_{\lambda \rightarrow 0}\E\brac{ \frac{1}{\lambda}\sum_{j\in A_r} \brac{1- e^{-\lambda(X-Y_j)}}}
=\sum_{j\in A_r}\E(X - Y_i) = r\E(X - Y).
}
\end{proof}
We now proceed to estimate $P(n,r)$. Fix $r$ and assume that $b^*\notin B_{r-1}^*$. Note that 
\begin{equation}\label{eqn:augpath}
    \text{$M_r^*$ is obtained from $M_{r-1}^*$ via a single augmenting path with $a_r$ as an endpoint. In particular, $B_{r-1}^* \subseteq B_r^*.$}
\end{equation}
Indeed, clearly the symmetric difference $M_{r-1}^* \Delta M_r^*$ contains some $M_{r-1}^*$-augmenting path with $a_r$ as an endpoint, since $M_r^*$ covers $a_r$ and $M_{r-1}^*$ does not. Since both matchings cover $a_1, \ldots, a_{r-1}$, any other component of $M_{r-1}^* \Delta M_r^*$ must be either a path or cycle with an even number of edges. But both matchings are supposed to minimize cost, so they could not disagree on such a component. Thus there are no such components and \eqref{eqn:augpath} is proved.

Suppose that $M_{r}^*$ is obtained from $M_{r-1}^*$ by finding an augmenting path $P=(a_{r},\ldots,a_{\s},b_\t)$ from $a_{r}$ to $B\setminus B_{r-1}$ of minimum additional weight. We condition on (i) $\s$, (ii) the costs of all edges other than $(a_\s,b_j),b_j\in B\setminus B_{r-1}$ and \\
(iii) $\min\set{w(a_\s,b_j):b_j\in B\setminus B_{r-1}}$. With this conditioning $M_{r-1}=M_{r-1}^*$ will be fixed and so will $P'=(a_{r},\ldots,a_\s)$. Let $m=n-r+2$ and $X_1,X_2,\ldots,X_{m-1}$ be independent copies of $W_{\th,e}$  and let $X_m$ be exponential with rate $\l$. We compute the probability that $X_m$ is the smallest of them if we condition on the value of $\min\set{X_1,X_2,\ldots,X_m}$. Indeed,

\begin{claim}
 For any $\g>0$, we have
 \beq{eq:conditional}
 {
 \Pr(X_m=\min\set{X_1,\ldots,X_m}\mid \min\set{X_1,\ldots,X_m}=\g)= \begin{cases}
\frac{\lambda}{\lambda + m-1} & \g \ge \th\\
    \frac{\lambda}{\lambda + \frac{m-1}{1+e^\g}}& \g < \th
\end{cases}
 }
\end{claim}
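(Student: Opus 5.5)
The plan is to compute the conditional probability directly from densities, using the hazard rates of the distributions involved. Because the $X_i$ are independent and each has a density (away from the single atom-free point $\th$), the joint density of the event ``$X_j$ is the minimum and equals $\g$'' is $f_j(\g)\prod_{i\ne j}S_i(\g)$. Here $f_j$ and $S_j$ denote the density and survival function of $X_j$. Dividing by $\sum_j f_j(\g)\prod_{i\neq j}S_i(\g)$ and then dividing numerator and denominator by $\prod_{i=1}^m S_i(\g)$ gives
\[
\Pr(X_m=\min\mid \min=\g)=\frac{h_m(\g)}{\sum_{j=1}^m h_j(\g)},\qquad h_j:=f_j/S_j,
\]
where $h_j$ is the hazard rate of $X_j$. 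It therefore suffices to compute the hazard rates.

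For $X_m\sim EXP(\l)$ the hazard rate is constant, $h_m\equiv\l$. For $X_i=W_{\th,e}=Z_e+\th\wh Z_e$, $i\le m-1$, I will compute the survival function and density explicitly in the two ranges.
\begin{itemize}
\item For $0\le x<\th$: $S(x)=\tfrac12e^{-x}+\tfrac12$, since when $\wh Z_e=1$ we always have $W>x$. The density is $f(x)=\tfrac12e^{-x}$.
\item For $x\ge\th$: $S(x)=\tfrac12e^{-x}+\tfrac12e^{-(x-\th)}=\tfrac12e^{-x}(1+e^{\th})$, and $f(x)=S(x)$.
\end{itemize}
Hence $h(x)=1$ for $x\ge\th$, and $h(x)=\frac{e^{-x}}{e^{-x}+1}=\frac1{1+e^{x}}$ for $x<\th$.

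Substituting $h_m=\l$ and $h_1=\dots=h_{m-1}=h(\g)$ yields $\frac{\l}{\l+(m-1)}$ when $\g\ge\th$ and $\frac{\l}{\l+\frac{m-1}{1+e^{\g}}}$ when $\g<\th$, which is \eqref{eq:conditional}.

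The only delicate point is justifying the conditioning on the null event $\set{\min=\g}$. I will handle this by taking $\Pr(X_m=\min,\ \min\in[\g,\g+d\g])$ divided by $\Pr(\min\in[\g,\g+d\g])$ and letting $d\g\to0$. This limit is valid at every $\g\ne\th$, because all densities are continuous there; the single point $\g=\th$ has measure zero and does not affect the statement. Ties among the $X_i$ have probability zero, so the minimum is almost surely unique.
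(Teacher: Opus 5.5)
Your proof is correct and follows essentially the same route as the paper. The paper writes the conditional probability as $g_m(\g)(1-G_1(\g))^{m-1}$ divided by the sum over which variable attains the minimum, then divides through, which is exactly your hazard-rate ratio $h_m/\sum_j h_j$. Your explicit justification of conditioning on the null event, and your remark that $\g=\th$ is a measure-zero point, add a bit of rigor that the paper leaves implicit.
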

\begin{proof}
Let $g_i(x)$ be the probability density function for $X_i$, and let $G_i(x)$ be its cumulative density function. We have

\begin{align*}
g_1(x) = \cdots = g_{m-1}(x)&=\begin{cases}\frac12e^{-x}+\frac12e^{-(x-\th)}&x\geq \th.\\ \frac12e^{-x}&x<\th.\end{cases}\\
G_1(x) = \cdots = G_{m-1}(x)&=\begin{cases}1-\frac12e^{-x}-\frac12e^{-(x-\th)}&x\geq \th.\\ \frac12 - \frac12e^{-x}&x<\th\end{cases}\\
g_m(x) &= \lambda e^{-\lambda x}\\
G_m(x) &= 1- e^{-\lambda x}
\end{align*}
 Thus 
 \begin{align*}
&\Pr(X_m=\min\set{X_1,\ldots,X_m}\mid \min\set{X_1,\ldots,X_m}=\g)\\
&=\frac{g_m(\g)(1-G_1(\g))^{m-1}}{g_m(\g)(1-G_1(\g))^{m-1}+(m-1)g_1(\g) (1-G_m(\g))(1-G_1(\g))^{m-2}}\\
&=\frac{1}{1+(m-1)\frac{g_1(\g)(1-G_m(\g))}{g_m(\g)(1-G_1(\g))}}\\
& =  \begin{cases}
\frac{\lambda}{\lambda + m-1} & \g \ge \th\\
    \frac{\lambda}{\lambda + \frac{m-1}{1+e^\g}}& \g < \th
\end{cases}
\end{align*}

\end{proof}

Define
\[
\F_r:=\Pr(X_m=\min\set{X_1,\ldots,X_m}) = \Pr(b^*\in B_r^*\mid b^*\notin B_{r-1}^*).
\]
If $f, F$ denote the pdf and cdf of $\min\{X_1, \ldots, X_m\}$, then
\[
\F_r=\int_{\g=0}^\th\frac{\l f(\g)}{\l+\frac{m-1}{1+e^\g}}\dd\g +\frac{\l(1-F(\th))}{\l+m-1}.
\]
Now assuming that $\g \le \th$, we have
\begin{align*}
1-F(\g) = \Pr(\min\{X_1, \ldots, X_m\}\geq \g)&=\bfrac{1+e^{-\g}}{2}^{m-1}e^{-\lambda \g} \\
&=\bfrac{1+e^{-\g}}{2}^{m-1}+O(\l).
\end{align*}
Note that above and for the rest of this section, asymptotics are as $\l \rightarrow 0$ and all other variables are held constant. Taking derivatives, for  $\g\le \th$ we have 
\begin{align*}
  f(\g)&=\frac12(m-1)e^{-\g}\bfrac{1+e^{-\g}}{2}^{m-2}e^{-\lambda \g}+\bfrac{1+e^{-\g}}{2}^{m-1}\lambda e^{-\lambda \g}\\
  & = \frac12(m-1)e^{-\g}\bfrac{1+e^{-\g}}{2}^{m-2}+O(\lambda )
\end{align*}

So,
\begin{align*}
  \F_r&=\int_{\g=0}^\th\frac{\l\cdot \frac12(m-1)e^{-\g}\bfrac{1+e^{-\g}}{2}^{m-2}}{\l+\frac{m-1}{1+e^\g}}\dd\g +\frac{\l\bfrac{1+e^{-\th}}{2}^{m-1}}{\l+m-1}+O(\l^2)\\
  &=\frac{\l}{2^{m-1}}\int_{\g=0}^\th(1+e^{-\g})^{m-1}\dd\g+\frac{\l(1+e^{-\th})^{m-1}}{2^{m-1}(m-1)}+O(\l^2)\\
  &=\frac{\l}{2^{m-1}}\int_{x=e^{-\th}}^1\frac{(1+x)^{m-1}}{x}\dd x+\frac{\l(1+e^{-\th})^{m-1}}{2^{m-1}(m-1)}+O(\l^2)\\
&=\l\Psi(r,\th)+O(\l^2),
\end{align*}
where we have defined (recalling $m=m(r) = n-r +2$)
\[
\Psi(r,\th):= \frac{1}{2^{m-1}}\brac{\int_{x=e^{-\th}}^1\frac{(1+x)^{m-1}}{x}\dd x+\frac{(1+e^{-\th})^{m-1}}{m-1}}.
\]
  It follows from \eqref{eqn:augpath} that
\begin{align}
 P(n, r) =  \lim_{\l\to0^+}\l^{-1}\Pr(b^*\in B_r^*)& = \lim_{\l\to0^+}\l^{-1}\brac{1-\prod_{i=1}^r\Pr(b^* \notin B_r^* \mid b^* \notin B_{r-1}^*)}\nn\\
 &=\lim_{\l\to0^+}\l^{-1}\brac{1-\prod_{i=1}^r(1-\F_i)}\nn\\
 &= \lim_{\l\to0^+}\l^{-1}\brac{1-\prod_{i=1}^r\brac{1-\l\Psi(i,\th)+O(\l^2)}}\nn\\
  &=\sum_{i=1}^r\Psi(i,\th)\nn\\
  &=\sum_{i=1}^r\frac{1}{2^{n-i+1}}\brac{\int_{x=e^{-\th}}^1\frac{(1+x)^{n-i+1}}{x}\dd x+\frac{(1+e^{-\th})^{n-i+1}}{n-i+1}}\label{Ups}\\
&=:\Upsilon(r,\th).\nn
\end{align}
Now using  $L_n(\th) = C(n, n)$, Lemma \ref{lem10a}, and $P(n, r) = \Upsilon(r, \th)$ we get
\beq{Efth}{
\E\f(\th) = -\th \a n + \E C(n, n) =-\th \a n + \sum_{r=1}^n  \E(C(n, r) - C(n, r-1)) =-\th\a n+\sum_{r=1}^n\frac{\Upsilon(r,\th)}{r}.
}
So the relaxation of \eqref{matchings} gives an expected lower bound of $\f(\th^*)$ where $\th^*$ is the solution to
\begin{align*}
  \a n&= \sum_{r=1}^n\frac{1}{r}\frac{\partial \Upsilon(r,\th)}{\partial \th}\\
      &=\sum_{r=1}^n\frac{1}{r}\sum_{i=1}^r\frac{(1+e^{-\th})^{n-i}}{2^{n-i+1}}\\
      &=\frac{(1+e^{-\th})^n}{2^{n}}\sum_{r=1}^n\frac{2^{r}-(1+e^{-\th})^{r}}{r(1+e^{-\th})^r(1-e^{-\th})}\\
      &\sim ne^{-\psi/2}\sum_{r=1}^n\frac{e^{\psi r/2n}-1}{r\psi}\\
  &\sim n\psi^{-1} e^{-\psi/2}\int_{x=0}^{\psi/2}\frac{e^{x}-1}{x}\dd x
\end{align*}
assuming that $\th=\psi/n,\psi=O(1)$ (this assumption is justified in Subsection \ref{sec:boundingtheta}). Note that while $\psi > 0$ for any $\a < 1/2$, the above expression approaches $n/2$ when $\psi \searrow 0$. This verifies our estimate for $\E Z_\a$.

\subsection{Concentration of $\f$}
Concentration of $\f(\th)$ around its expectation can be justified as in Section \ref{concsec}. All we need is verification that w.h.p.  the optimal matchings do not contain edges of weight more than $n^{-9/10}$. This is a fairly standard calculation that depends on the expansion of $K_{n,n,p}$. We will use the following lemma:
\begin{lemma}\label{Mn-1}
In $K_{n,n,p}$ with $p=n^{-10/11}$, with probability $1-o(n^{-200})$, for every matching $M$ of size $n-1$, there is an augmenting path of length at most 53.
\end{lemma}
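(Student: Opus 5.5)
Let $M$ be a matching of size $n-1$ that leaves $a\in A$ and $b\in B$ uncovered. The plan is to grow alternating trees from both of these vertices. From $a$ we follow non-matching edges into $B$ and then matching edges back into $A$; from $b$ we do the same with the roles of $A$ and $B$ exchanged. This gives sets $S_0=\{a\}\subseteq S_1\subseteq\cdots$ of $A$-vertices reachable from $a$ by even alternating paths, with $S_{i+1}=S_i\cup M(N(S_i))$. The sets $T_j\subseteq B$ reachable from $b$ are defined analogously. If some alternating path from $a$ reaches an uncovered $B$-vertex, that vertex must be $b$ and we already have an augmenting path. Otherwise $|S_{i+1}|=|N(S_i)|+1$, because every vertex of $N(S_i)$ is matched into $A$ and $a$ is not a partner of any of them. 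So it suffices to show that the neighbourhoods expand quickly, and then that the two explorations meet.

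\textbf{Expansion properties, for all sets at once.} With $p=n^{-10/11}$ we have $np=n^{1/11}$. I will show that with probability $1-o(n^{-200})$ two properties hold.

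\emph{(E1)} Every $S\subseteq A$ (and every $S\subseteq B$) with $|S|\le n/(np)^{1/2}$ satisfies $|N(S)|\ge (np)^{1/2}|S|/10$.

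\emph{(E2)} Any two disjoint-side sets $S\subseteq A$, $T\subseteq B$ with $|S|,|T|\ge n/(np)^{1/2}\cdot 10^{-1}$, say, have an edge between them.

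For (E2), the failure probability for a fixed pair is $(1-p)^{|S||T|}\le \exp(-cn^{2}p/(np))=\exp(-cn)$. A union bound over at most $4^n$ pairs does not quite work as stated, so instead I take both sets of size $\epsilon n$ for a small fixed $\epsilon$. Then a fixed pair fails with probability $\exp(-\epsilon^2n^2p)=\exp(-\epsilon^2n^{12/11})$, which easily beats $4^n$.

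For (E1), the standard count applies: the probability that some $S$ of size $s$ has $N(S)$ contained in a set of size $t=(np)^{1/2}s/10$ is at most
\[
\binom ns\binom nt(1-p)^{s(n-t)}\le \exp\bigl(s\log n+t\log n-spn/2\bigr).
\]
This is at most $\exp(-snp/4)$, since $t\log n\ll snp$ and $\log n\ll np$. Summing over $s$ gives $o(n^{-200})$.

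For the middle range, where $|S|$ lies between $n/(np)^{1/2}$ and $\epsilon n$, I use the same count with the target $|N(S)|\ge 2|S|$, say. Here the exponent is $-s(n/2)p+O(s\log n)$, which is again negative and large.

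\textbf{Counting levels.} Starting from $|S_0|=1$, the bound (E1) multiplies the size by at least $(np)^{1/2}/10\approx n^{1/22}/10$ per step, with the $+1$ and matching only helping. So after about $22+O(1)$ steps $|S_i|\ge n/(np)^{1/2}$. Using the middle-range bound, the size then doubles at each step, and after $O(\log(1/\epsilon))$ further steps $|S_i|\ge\epsilon n$. I intend to tune the constants, for instance by using the neighbourhood bound $\min\{(np)^{1/2}|S|/10,\,n/3\}$, which follows from the same count, so that the jump from about $n^{21/22}$ to $\epsilon n$ takes one step. Each half then needs at most about $24$ steps of length $2$, and one connecting edge (E2) joins an $A$-side set reached from $a$ to a $B$-side set reached from $b$. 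This gives total length at most $2\cdot 2\cdot 13+1=53$. I expect this bookkeeping of the constants to be the fiddly part, not a conceptual one.

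\textbf{Joining the two trees, the main obstacle.} An edge from $S_i$ (even vertices from $a$) to $T_j$ (even vertices from $b$) gives an alternating walk $a\to\cdots\to x\to y\to\cdots\to b$. The concern is that the two alternating paths might share vertices, so that the walk is not a path. I would handle this in the standard way: if the two paths intersect, then since both are alternating with respect to the same $M$, some matching edge lies on both. Following the $a$-path up to its first vertex on the $b$-path and then continuing along the $b$-path in the right direction gives a shorter augmenting path. Its length is no greater, so the bound of $53$ is preserved.
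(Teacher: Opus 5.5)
\textbf{The gap: your argument does not give the constant $53$.} Your plan is the paper's: grow alternating trees from the two uncovered vertices, get neighbourhood expansion from a first-moment union bound, and join two linear-size sets by an edge. Your walk-to-path repair is also correct: the first vertex of the $a$-path lying on the $b$-path must be a $B$-vertex, and from there you switch to the $b$-path. The paper leaves this step implicit.

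The trouble is that (E1) only gives growth by a factor $(np)^{1/2}/10=n^{1/22}/10$ per double step. Getting from $|S_0|=1$ to $n/(np)^{1/2}=n^{21/22}$ therefore takes $22$ double steps; after $21$ you only have $n^{21/22}/10^{21}$. At least one more double step is needed to reach $\epsilon n$. That is about $46$ edges on each side, so the augmenting path you construct can have length up to about $93$. Your own text says each half needs ``about 24 steps of length 2'' and then writes $2\cdot 2\cdot 13+1$. The $13$ does not follow: thirteen double steps at rate $n^{1/22}/10$ only reach size about $n^{13/22}$.

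\textbf{The fix.} Do not give away half the exponent. Your bound $\exp(s\log n+t\log n-snp/2)$, valid for $t\le n/2$, is still at most $\exp(-snp/5)$ when $t=np\,s/(4\log n)$. The condition $t\le n/2$ holds for every $s\le n^{10/11}\log n$. Summing over $s$ gives, with probability $1-o(n^{-200})$, that every $S$ with $|S|\le n^{10/11}\log n$ satisfies $|N(S)|\ge np|S|/(4\log n)$.

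At rate $n^{1/11}/(4\log n)$, eleven double steps take $|S_i|$ past $n^{10/11}\log n$, because $n/(4\log n)^{11}\gg n^{10/11}\log n$. Applying the bound to a subset of size $\lfloor n^{10/11}\log n\rfloor$ then gives $|S_{12}|\ge n/5$, say. So each tree reaches linear size within $24$ edges. Your (E2) with $\epsilon=1/5$ then gives an augmenting path of length at most $2\cdot 24+1=49\le 53$. This is the paper's count: a factor $np/(3\log n)$ per two levels up to size $n^{10/11}\log n$, then sets of size $n/3$.

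\textbf{A minor point.} The joining edge $xy$ between $S_i$ and $T_j$ may lie in $M$, in which case the concatenated walk is not alternating. In that case $y=M(x)$ already lies on the $a$-path, so your intersection repair still produces an augmenting path that is no longer. Alternatively, require more than $n$ edges between the two sets, as the paper does, so that some joining edge is a non-matching edge.
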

\begin{proof}
Indeed suppose that the bipartition of vertices in $K_{n,n}$ is $A,B$ and $N_p$ denotes neighborhood in $K_{n,n,p}$. Then, 
\begin{align*}
&\Pr(\exists S\subseteq A:|S|\leq n^{10/11} \log n,|N_p(S)|\le np|S|/2\log n)\\
&\leq \sum_{s=1}^{n^{10/11}\log n}\binom{n}{s}\binom{n}{n-nps/2\log n}(1-p)^{(n-nps/2\log n)s}\\
& \leq \sum_{s=1}^{n^{10/11}\log n}\brac{\frac{ne}{s}}^s \brac{\frac{ne}{nps/2\log n}}^{nps/2\log n} e^{-(n-nps/2\log n)ps}
 \leq \sum_{s=1}^{n^{10/11}\log n}  e^{-nps/3}=o(n^{-200}).
\end{align*}
Suppose that $u\in A,v\in B$ are the two vertices not covered by $M$. The set $A_1 \subseteq A$ of vertices reachable from $u$ by an alternating path of length at most 26 has $|A_1| \ge n/3$, with the required probability. Indeed, starting at $u$ we can find the tree containing all possible such alternating paths by iteratively finding the neighborhood of the current bottom level of the tree, then taking the matching edges covering those vertices, and repeating. Every time we add two levels to the tree, the number of vertices at the bottom level expands by a factor of at least $np/3 \log n = n^{1/11}/3 \log n$ (where we replaced a ``2'' with a ``3'' to account for any vertices that were already in the tree). This lasts until the bottom layer has $n^{10/11} \log n$ vertices, which happens by the time we reach level 26. Two more levels give us another expansion by factor $n^{1/11}/3 \log n$, bringing us to $n/3$ vertices in the bottom level. Similarly, there is a set of at least $n/3$ vertices $B_1$ in $B$ that are reachable from $v$ by an alternating path of length at most 26. Then w.h.p., whatever the sets $A_1,B_1$,  there is a non-matching edge  joining $A_1$ and $B_1$, giving an alternating path $P$ from $u$ to $v$ of length at most 53. Indeed, the probability that there exist $A_1,B_1$ not joined by at least $n+1$ edges is at most $2^{2n}e^{-\Omega(n^2p)}=o(n^{-200})$.
\end{proof}
Suppose the minimum cost perfect matching $M^*$ contains an edge $e=(u,v)$, $u \in A, v \in B$,  of weight at least $n^{-9/10}$.  Remove $e$ to obtain near perfect matching $M$. Note that $n^{-10/11}$ is the probability that an edge is blue and has cost at most $(2+o(1))n^{-10/11}$. Using the path $P$ promised by Lemma \ref{Mn-1}, we can find a matching with a cost $n^{-9/10}-O(n^{-10/11})$ less than $M^*$, a contradiction. 
\subsection{Bounding $\th^*$}\label{sec:boundingtheta}
We need to justify the assumption that $\th^*=O(1/n)$. The dual function $\f(\th)=L_n(\th)-\th\a n$ and w.h.p. $L_n(\th)=O(1)$, independent of $\th$, given that we can just use blue edges for a perfect matching. We have to maximise this over $\th\geq 0$.
This rules out $\th\geq K/n$ for some constant $K>0$, as we know that $\max\f(\th)\geq \z(2)$.

We next estimate $\f(\th^*)$ assuming $\th^*=\psi/n$. For this, we go back to \eqref{Ups} and \eqref{Efth}. First of all
\begin{align}
\sum_{r=1}^n\sum_{i=1}^r\frac1r \cdot \frac{(1+e^{-\psi/n})^{n-i+1}}{2^{n-i+1}(n-i+1)}&=\sum_{r=1}^n\sum_{i=1}^r\frac1r\cdot \frac{(1-\frac{\psi}{2n}+O(n^{-2}))^{n-i+1}}{2^{n-i+1}(n-i+1)}\nn\\
&=\sum_{r=1}^n\sum_{i=1}^r\frac{e^{-(n-i+1)\psi/2n}+O(1/n)}{r(n-i+1)}\label{intbit}\\
&=\sum_{i=1}^n\frac{e^{-i\psi/2n}}{i}\sum_{r=n-i+1}^n\frac{1}{r}+o(1)\nn\\
&=-\int_{x=0}^1\frac{e^{-\psi x/2}\log(1-x)}{x}\dd x+o(1).\label{afterbit}
\end{align}
Then we have
\begin{align*}
\int_{x=e^{-\psi/n}}^1\frac{(1+x)^{n-i+1}}{x}\dd x&=\int_{y=0}^{1-e^{-\psi/n}}\frac{\brac{1-\frac{y}{2}}^{n-i+1}}{1-y}\dd y\\
&=\int_{y=0}^{1-e^{-\psi/n}}\frac{e^{-(n-i+1)y/2}}{1-y}\dd y+O(1/n)\\
&=\frac{2}{n-i+1}(1-e^{-(n-i+1)\psi/2n})+O(1/n).
\end{align*}
Then, using \eqref{intbit}, we obtain
\[
\sum_{r=1}^n\sum_{i=1}^r\frac1r \cdot \frac{2}{n-i+1}(1-e^{-(n-i+1)\psi/2n})\sim \frac{\pi^2}{3}+2\int_{x=0}^1\frac{e^{-\psi x/2}\log(1-x)}{x}\dd x+o(1).
\]
Combined with \eqref{afterbit}, we get the expression in \eqref{PMcost}.

Now $\f(\th^*)$ only gives a lower bound on the minimum cost of a perfect matching. We discuss a matching upper bound.  Let $\cM^*(\th)$ denote the set of matchings that minimise $W_{\th}$. According to \cite{Ge}, $\f(\th^*)$ is the objective value of the  solution to an LP relaxation of \eqref{matchings} and this solution is a convex combination of matchings in $\cM^*(\th)$.  Thus, if $\cM^*(\th^*)$ contains a unique matching $M^*$ then $C(M^*)=\f(\th^*)$ and we are done.   
\begin{lemma}\label{cl1}
With probability one, $|\cM^*(\th)|\leq 2$ for all $\th > 0$.
\end{lemma}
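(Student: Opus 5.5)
The plan is to show that three or more optimal matchings at a common $\th$ would force a nontrivial linear relation among the continuous costs $X_e$, an event of probability zero.

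For a perfect matching $M$ write
\[
W_\th(M)=X(M)+\th\, r(M),\qquad X(M)=\sum_{e\in M}X_e,\quad r(M)=|E_R\cap M|.
\]
For the fixed coloring, each $M$ contributes a line $\th\mapsto X(M)+\th r(M)$, and $L_n(\th)$ is the lower envelope of these finitely many lines. Since there are only finitely many perfect matchings and colorings, it suffices to condition on the coloring and show that, almost surely over the continuous costs, no three distinct matchings tie at the envelope for any $\th>0$.

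\textbf{Step 1: two distinct matchings with equal red count have distinct costs.} Suppose $M\neq M'$ with $r(M)=r(M')$. Then $W_\th(M)=W_\th(M')$ is equivalent to $X(M)=X(M')$, which says $\sum_{e\in M\triangle M'}\pm X_e=0$. This is a nontrivial linear form in the independent absolutely continuous variables $X_e$, so it vanishes with probability zero. A union bound over the finitely many pairs gives that, almost surely, no two distinct matchings have the same red count and the same cost. Consequently, at any $\th$ the minimizers in $\cM^*(\th)$ have pairwise distinct red counts.

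\textbf{Step 2: three minimizers force a codimension-two coincidence.} Suppose $M_1,M_2,M_3\in\cM^*(\th)$ for some $\th>0$, with red counts $r_1<r_2<r_3$. The equalities $W_\th(M_1)=W_\th(M_2)=W_\th(M_3)$ give
\[
\th=\frac{X(M_1)-X(M_2)}{r_2-r_1}=\frac{X(M_2)-X(M_3)}{r_3-r_2}.
\]
Eliminating $\th$ yields
\[
(r_3-r_2)\bigl(X(M_1)-X(M_2)\bigr)=(r_2-r_1)\bigl(X(M_2)-X(M_3)\bigr),
\]
which is a linear form $\sum_e c_eX_e=0$ with integer coefficients $c_e$, and this form does not depend on $\th$. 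So the three lines can only meet at a single point if this one relation holds.

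\textbf{Step 3: the linear form is not identically zero.} This is the main obstacle, since cancellation is conceivable. The coefficient of an edge $e$ is
\[
c_e=(r_3-r_2)1_{e\in M_1}-(r_3-r_1)1_{e\in M_2}+(r_2-r_1)1_{e\in M_3},
\]
because the coefficient of $1_{e\in M_2}$ on the two sides combines to $-(r_3-r_2)-(r_2-r_1)=-(r_3-r_1)$. Now pick an edge $e\in M_2\sm M_1$; one exists because $M_1\neq M_2$ and both are perfect matchings. If $e\notin M_3$, then $c_e=-(r_3-r_1)\neq0$. If $e\in M_3$, then $c_e=-(r_3-r_2)\neq0$. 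In either case the form is nontrivial, so it vanishes with probability zero. A union bound over the finitely many triples and colorings completes the argument.

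Conditioning on the coloring is legitimate because the colors are independent of the continuous costs. The bound $|\cM^*(\th)|\le 2$ then holds simultaneously for all $\th>0$, because the bad event in Step 2 was expressed without reference to $\th$.
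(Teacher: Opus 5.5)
Your proof is correct and takes the same route as the paper: first rule out two minimizers with equal red counts, then eliminate $\theta$ from the two tie equations to obtain a $\theta$-free linear relation among the costs, which holds with probability zero. Your Step 3 is an addition: checking that the coefficient of an edge $e\in M_2\setminus M_1$ is nonzero shows that this relation is nontrivial, a point the paper asserts without comment.
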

\begin{proof}
Suppose that there are three distinct members of $\cM^*(\th)$ that minimise  $W_{\th}$. This implies that there are three distinct perfect matchings $M_i,i=1,2,3$ with equal values for  $C(M_i)+\th|M_i\cap E_R|$. Now if $|M_1\cap E_R|=|M_2\cap E_R|$ then $C(M_1)=C(M_2)$, an event of probability zero. We can therefore assume that all the $|M_i\cap E_R|$ are distinct. But this implies, that 
\beq{eqx}{
\th=\frac{C(M_1)-C(M_2)}{|M_2\cap E_R|-|M_1\cap E_R|}=\frac{C(M_1)-C(M_3)}{|M_3\cap E_R|-|M_1\cap E_R|}
}
or
\[
(C(M_1)-C(M_2))(|M_3\cap E_R|-|M_1\cap E_R|)=(C(M_1)-C(M_3))(|M_2\cap E_R|-|M_1\cap E_R|),
\]
 an event of probability zero.
\end{proof}
So assume there are $M_1,M_2\in \cM^*(\th^*)$  that satisfy $C(M_1)+\th^*|M_1\cap E_R|=C(M_2)+\th^* |M_2\cap E_R|=L_n(\th^*)$. Following \cite{BBGS}, we prove
\begin{lemma}\label{lemm}
Given $M_1,M_2$ we can construct a matching $M$ with $n-1$ edges for which $C(M)\leq \f(\th^*)+O(1/n)$ and $|M\cap E_R|\leq \a n$. 
\end{lemma}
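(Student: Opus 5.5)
We follow the standard argument of \cite{BBGS} for the constrained assignment problem. By Lemma \ref{cl1} we may assume that $\cM^*(\th^*)$ consists of exactly two matchings $M_1,M_2$ (if it has one, we are already done). By Geoffrion's theorem, $\f(\th^*)$ is the value of an LP optimum that is a convex combination $\l M_1+(1-\l)M_2$ satisfying \eqref{M1}. Since $\th^*>0$ is a maximiser of the concave piecewise linear function $\f$, optimality forces $|M_1\cap E_R|\le \a n\le |M_2\cap E_R|$ (label them so). Write $r_i=|M_i\cap E_R|$ and $c_i=C(M_i)$. Then $c_i+\th^* r_i=L_n(\th^*)$ for $i=1,2$, so
\[
\f(\th^*)=L_n(\th^*)-\th^*\a n=c_1+\th^*(r_1-\a n)\geq c_1 .
\]
This already bounds the cost of $M_1$, and $M_1$ also satisfies the color constraint. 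So a trivial version of the lemma holds with $M=M_1$ (even as a perfect matching), provided the labelling is right.

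The real issue is the sign of $r_1-\a n$. Since $\th^*>0$ is interior, the subgradient condition gives $r_1\le\a n\le r_2$. Hence $\f(\th^*)=c_1-\th^*(\a n-r_1)\le c_1$, and $M_1$ may be too expensive by $\th^*(\a n-r_1)$, which is $O(1)\cdot\psi(\a n-r_1)/n$ and could be $\Theta(1)$. So we must interpolate between $M_1$ and $M_2$.

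To interpolate, consider the symmetric difference $M_1\oplus M_2$, a union of alternating cycles $D_1,\dots,D_k$. Each cycle $D_j$ changes the red count by $\Delta r_j$ and the cost by $\Delta c_j$ when switched from $M_1$ to $M_2$. Because both matchings lie in $\cM^*(\th^*)$ and each cycle can be switched independently, minimality forces $\Delta c_j+\th^*\Delta r_j=0$ for every $j$; otherwise switching a single cycle would beat the optimum. Now switch cycles one at a time from $M_1$ toward $M_2$, keeping $W_{\th^*}$ equal to $L_n(\th^*)$. Stop just before the red count first exceeds $\a n$. If some cycle $D$ would overshoot, switch only part of it: remove one $M_1$-edge of $D$ and replace a suitable alternating path segment of $D$ by its $M_2$ edges. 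This produces a matching of size $n-1$ whose red count is exactly at most $\a n$. Its $W_{\th^*}$-cost is at most $L_n(\th^*)$, so
\[
C(M)\le L_n(\th^*)-\th^* r(M)\le \f(\th^*)+\th^*(\a n-r(M)).
\]
We must therefore also arrange $\a n-r(M)=O(1)$, and the choice of segment gives this. The correction term is then $O(\th^*)=O(1/n)$, by Subsection \ref{sec:boundingtheta}.

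The main obstacle is the partial switch along one long cycle. Taking a path segment of $D$ starting at a vertex yields a matching of size $n-1$. We must check two things. First, the cost discrepancy from the partial switch is controlled; writing the cost in terms of $W_{\th^*}$, the segment's net change is $\Delta c+\th^*\Delta r$ over a prefix, which need not vanish. Second, the red count moves by at most $1$ per step along the segment. For the first point I would bound the prefix discrepancy by the weight of the single dropped edge plus $\th^*$ times $O(1)$. The dropped edge has weight $O(n^{-9/10})$ by the concentration section, but we want $O(1/n)$. To achieve this I would choose the cut point along the cycle so that the dropped edge is the cheapest available among the $\Theta(1)$ candidate positions giving red count $\a n$ or $\a n-1$. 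If that fails, I would fall back to \cite{BBGS}'s trick: orient each cycle, and note that the prefix sums of $\Delta c+\th^*\Delta r$ over a cycle return to $0$, so some cyclic rotation has all prefix sums $\le 0$. Starting the segment there gives $W_{\th^*}(M)\le L_n(\th^*)$ exactly, minus the removed edge's nonnegative weight, and hence $C(M)\le\f(\th^*)+\th^*=\f(\th^*)+O(1/n)$.
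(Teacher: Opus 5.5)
Your fallback is exactly the paper's gasoline-lemma argument (\cite{BBGS}, Lov\'asz's Problem 3.21), and it is correct: rotate the alternating cycle so that every prefix sum of the $W_{\th^*}$-change is $\le 0$, walk until the red count reaches $\a n$, then discard one edge to get $C(M)\le \f(\th^*)+\th^*$. Your per-cycle version is fine; in fact Lemma \ref{cl1}, combined with your zero-net-change observation, shows that $M_1\oplus M_2$ is a single cycle.

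Commit to the fallback and delete the primary route. The $W_{\th^*}$-discrepancy of a prefix is a sum over many edges of the segment, so no choice of cut point makes it controlled by the weight of the single dropped edge. Also, the inequality $\f(\th^*)\ge c_1$ in your first paragraph has the wrong sign, as you note afterwards.
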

\begin{proof} 
Now we cannot have $\d_1= \min\set{|M_1\cap E_R|,|M_2\cap E_R|}-\a n>0$ else $\f(\th^*+\d_2)>\f(\th^*)$ for some small $\d_2>0$. This follows from Lemma \ref{cl1}. Let $\xi=\min\set{W_{\th^*}(M)-W_{\th^*}(M_1):M\neq M_1,M_2}>0$ and let $\d_2=\xi/2n$. Then we have $C_{\th^*+\d_2}(M)\geq C_{\th^*+\d_2}(M_1)+\xi-n\d_2>C_{\th^*+\d_2}(M_1)$ for $M\neq M_1,M_2$ so that $M_1,M_2$ minimise $C_{\th^*+\d_2}$. And then $\f(\th^*+\d_2)-\f(\th^*)=(|M_1\cap E_R|-\a n)\d_2>0$. 

Now $\th^*>0$ w.h.p. This is because $\f(0)$ is the minimum cost of a matching ignoring the color constraint. Then w.h.p. this matching $M$, which uniquely minimises cost, satisfies $|M\cap E_R|\sim n/2$ and then $\f(\d_3)=C(M)+\d_3(1/2-o(1)-\a)n>C(M)=\f(0)$ for $\d_3$ sufficiently small, contradiction. 

Given this, we can rule out $\d_4= \max\set{|M_1\cap E_R|,|M_2\cap E_R|}-\a n<0$. For then we could argue as for $\d_1>0$ that we can reduce $\th^*$ by a small positive amount, less than $\th^*$, and increase $\f$.  

If $|M_i \cap E_R | = \a n$ for some $i$, then $M_i$ itself is feasible and satisfies $C(M_i) = L_n(\th^*)-\th^*|M_i\cap E_R|=L_n(\th^*)-\th^*\a n=\f(\th^*)$, so there is nothing to prove.

Assume then that $|M_1\cap E_R|<\a n<|M_2\cap E_R|$. Let $K=\set{e_1,e_2,\ldots,e_k}=M_1\oplus M_2$, where $M_1$ is the set of odd subscript edges.  Assume that each cycle of $M_1 \oplus M_2$ appears as some contiguous subset $\{e_j, e_{j+1}, \ldots, e_{j'}\}$ of $K$. Let $a_i=\d(e_i)W_{\th^*, e_i}$ where $\d(e_i)=1$ for $e_i\in M_1$ and -1 otherwise. Then 
\[
W_{\th^*}(M_1)-W_{\th^*}(M_2)= \sum_{i=1}^ka_i=0
\]
 and so there exists $\ell$ such that (below, subscripts are taken modulo $k$)
\beq{1}{
\sum_{j=1}^ta_{\ell+j}\geq 0\text{ for }t=0,1,\ldots,k-1.
}
Indeed, it suffices to take $\ell = \text{argmin}\left\{\sum_{j=1}^\ell a_j: 1 \le \ell \le k \right\}$. This is the content of the gasoline lemma of Lov\'asz \cite{L}, Problem 3.21. 

For $t\geq 0$ let $X_t=M_1\cup \set{e_{\ell+j}:j=1, \ldots, t,\,\ell+j\text{ is even}}\setminus \set{e_{\ell+j}:j=1, \ldots, t,\,\ell+j\text{ is odd}}$. Let $\t=\max\set{t:|X_t\cap E_R|\leq \a n}$. Then we must have $\ell+\t$ odd, since $X_{t+1} \subseteq X_{t}$ when $\ell+t$ is even. Let $M= X_\t \setminus e_\ell$ if $\ell$ is odd, and $M= X_\t \setminus e_{\ell+1}$ if $\ell$ is even. Note that $M$ is a matching, being obtained from $M_1$ via an alternating path. Also, $|M\cap E_R|\leq |X_\t\cap E_R|\leq {\a n}$ and $|M|=n-1$. Note that \eqref{1} implies that 
\[
W_{\th^*}(M)\leq W_{\th^*}(X_\t)\leq W_{\th^*}(M_1).
\]
So,
\begin{align*}
C(X_\t)&=W_{\th^*}(X_\t)-\th^* |X_\t\cap E_R|\\
&=W_{\th^*}(X_\t)-\th^* {\a n}+\th^*({\a n}- |X_\t\cap E_R|) \\ 
&\leq W_{\th^*}(M_1)-\th^* {\a n}+\th^*\\
&=\f(\th^*)+\th^*.
\end{align*}
The maximality of $\t$ implies that $e_{\ell + \t + 1}\in E_R$. So,
\beq{up1}{
C(M)\leq C(X_\t)\leq \f(\th^*)+\th^*=\f(\th^*)+O(1/n).
}
Furthermore, by construction,
\beq{upC1}{
|M\cap E_R|\leq {\a n}.
}
\end{proof}
At this point, we need to deal with the fact that $|M|=n-1$. 
\begin{lemma}\label{Mn-1x}
W.h.p. for every matching $M$ of size $n-1$ there is an augmenting path that creates a matching $M'$ with (i) $|M'|=|M|+1$, (ii) $C(M')\leq C(M)+O(n^{-10/11})$ and (iii) $|M'\cap E_R|\leq|M\cap E_R|$.
\end{lemma}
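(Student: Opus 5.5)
We will apply Lemma \ref{Mn-1} not to all of $K_{n,n}$ but to a sparse random subgraph chosen so that every edge of it is blue and cheap. Set $p=n^{-10/11}$ and let $H$ be the bipartite graph on $A\cup B$ whose edges are the blue edges $e$ with $X_e\le p'$. Here $p'$ is chosen so that $\Pr(e\text{ blue and }X_e\le p')=p$, that is $\frac12(1-e^{-p'})=p$, so $p'=(2+o(1))n^{-10/11}$. Colours and costs are independent across edges, so $H$ is distributed exactly as $K_{n,n,p}$. Lemma \ref{Mn-1} then says that, with probability $1-o(n^{-200})$, $H$ has the augmenting-path property for every matching of size $n-1$.

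The point to handle is that Lemma \ref{Mn-1} is stated for matchings of $H$, whereas our $M$ is a matching of $K_{n,n}$ that may use edges outside $H$. So I will check that its proof never uses the matching edges being edges of $H$. The alternating tree from the uncovered vertex $u\in A$ alternates two kinds of steps. A non-matching step goes from the current $A$-level to its $H$-neighbourhood, and there the expansion estimate for $H$ applies. A matching step follows the $M$-edge covering each reached $B$-vertex, and this is injective whatever those edges are. The same holds for the tree grown from the uncovered $v\in B$. The final step needs some non-matching edge of $H$ between the sets $A_1$ and $B_1$, each of size at least $n/3$. The proof gives, with the required probability, at least $n+1$ edges of $H$ between any two such sets, and at most $n-1$ of them lie in $M$, so one is not in $M$. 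Hence the lemma holds verbatim for an arbitrary matching $M$ of $K_{n,n}$ of size $n-1$. The output is an augmenting path $P$ of length at most $53$ whose non-matching edges all lie in $H$.

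Set $M'=M\oplus P$. Then (i) holds. For (ii), the edges gained are at most $27$ edges of $H$, each of cost at most $p'$. The edges lost have nonnegative cost. So
\[
C(M')\le C(M)+27p'=C(M)+O(n^{-10/11}).
\]
For (iii), every edge added is blue, so the number of red edges cannot increase. This gives $|M'\cap E_R|\le|M\cap E_R|$.

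The only real obstacle is the check in the second paragraph: that the expansion argument, especially the count of at least $n+1$ edges between $A_1$ and $B_1$, is uniform over all sets and so survives when $M$ is an arbitrary matching. If that count turns out too tight, I would instead require $\Omega(n^2p)$ edges between any two sets of size $n/3$. This follows from the same Chernoff bound used in the proof, and $n^2p\gg n$ edges is far more than the at most $n-1$ that $M$ can occupy.
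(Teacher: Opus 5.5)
Your proposal is correct and follows the paper's own argument: apply Lemma~\ref{Mn-1} to the random subgraph of blue edges of cost $O(n^{-10/11})$, so that the augmenting path adds only cheap blue edges. The paper uses the threshold $2n^{-10/11}$ where you use the exactly calibrated $p'$. Your check that the proof of Lemma~\ref{Mn-1} still works when $M$ is an arbitrary matching of $K_{n,n}$, not one inside $H$, is a step the paper leaves implicit, and it is needed and correct.
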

\begin{proof}
Apply Lemma \ref{Mn-1} where $K_{n,n,p}$ is defined by the blue edges of cost at most $2n^{-10/11}$. The promised augmenting path will produce a perfect matching. The increase in cost will be $O(n^{-10/11})$ and no red edges will be added.
\end{proof}
Combining Lemmas~\ref{lemm} and~\ref{Mn-1x}, w.h.p. there is a perfect matching $M'$ satisfying $|M'\cap E_R|\le \alpha n$ and $C^*\leq C(M')\le \phi(\theta^*)+O(n^{-10/11})+O(1/n)=\phi(\theta^*)+o(1)$. Weak duality gives $C^*\ge \phi(\theta^*)$, and hence $C^*=\phi(\theta^*)+o(1)$ w.h.p.  Thus we have that w.h.p. 
\[
\f(\th^*)\leq Z_\a\leq \f(\th^*)+o(1).
\]
So, $Z_\a\sim \f(\th^*)$ w.h.p. and then the concentration of $\phi$ around its mean shows that $Z_\a$ is also concentrated around the mean defined in the statement of Theorem~\ref{th3}.

\section{Proof of Theorem \ref{th4}}
Karp's patching algorithm first solves the problem of finding a minimum weight perfect matching $M^*$ (Assignment Problem) in $K_{n,n}$. We associate this solution to a cycle cover, i.e. a set of vertex disjoint directed cycles in the complete digraph $\vec K_n$. (Assume that the vertices of $K_{n,n}$ are $A=\set{a_1,a_2,\ldots,a_n}$ and $B=\set{b_1,b_2,\ldots,b_n}$. Edge $\set{a_i,b_j}$ of $M^*$ is replaced by the directed edge $(i,j)$ of $\vec K_n$. This produces a cycle cover. Note that $\set{a_i,b_i}$ gives rise to a loop at vertex $i$. Note also that this shows that $Z_\a(PM)\leq Z_\a(ATSP)$.)

We instead have to solve the problem of finding a minimum weight perfect matching under edge color constraints.  We can solve the associated Linear Programming relaxation and then applying Lemma \ref{lemm} find an asymptotically optimal matching $M^*$ that satisfies the color constraints. It follows from the proof of Lemma \ref{Mn-1} that no edge of the matching $M^*$ has $X_e\geq \eta_0=C_0n^{-1/3}$ for some $C_0>0$. So, w.h.p. we can construct $M^*$ by ignoring all edges $E_{Large}$ of cost greater than $\eta_0$. 

One can argue by symmetry as was done by Karp \cite{Karp} that $M^*$ has a uniform distribution and so its associated cycle cover has $m=O(\log n)$ cycles. (Choose the constrained optimum matching using a label-equivariant tie-breaking rule, or choose uniformly among all constrained optima. Since the joint law of colors and costs is invariant under permutations of the two vertex classes, the resulting perfect matching is uniformly distributed over all perfect matchings.) Let these cycles be $K_1,K_2,\ldots,K_m$ where $|K_i|\geq |K_{i+1}|$. Let $E_1,E_2,\ldots,E_{m-1}$ be a random partition of $E_{Large}$ into $m-1$ almost equal size subsets. We proceed by patching the cycles together. Given cycles $K,K'$ we patch them together by finding edges $e=(a,b)\in K,f=(c,d)\in K'$ and then replacing $e,f$ by $e'=(a,d),f'=(c,b)$. This gives us a new cycle with vertex set $V(K)\cup V(K')$. We call $e,f$ a patching pair if $(a,d),(c,b)$ are blue and both edges have cost in $[\eta_0,2\eta_0]$.  (If one cycle is a loop on vertex $a$ then we modify the construction appropriately, replacing an edge $(b,c)$ in the larger cycle by the path $b,a,c$.) Different pairs of arcs in $K_i,K_j$ give rise to disjoint patching pairs with independent edge costs. We ignore the saving associated with deleting $e,f$ and only look at the extra cost $C(e')+C(f')$ incurred. 

We use $E_1$ to find a cycle to patch into $K_1$, creating a cycle $\wh K$. In general, for $i\geq 2$, we use $E_i$ to find a cycle to patch into $\wh K$, increasing its size. Assuming that we search over all possible patching pairs involving $\wh K$ and unabsorbed cycles, the number of possible patching pairs $|\wh K|(n-|\wh K|)\geq n-1$ holds for every $i$. So the probability we fail to find the $m-1$ needed patches is at most $m(1-\Omega(\eta_0^2/m^2))^{n-1}=o(1)$. The additional cost associated with this construction is $O(mn^{-1/3})=o(1)$ and only blue edges are added and so the red count does not increase. Thus the resulting Hamilton cycle is feasible and has cost $Z_\alpha(PM)+o(1)$. Since $Z_\alpha(PM)\le Z_\alpha(ATSP)$, this proves Theorem~\ref{th4}.
\section{General distributions}\label{distributions}
We will now consider the situation where the costs/weights $X_e$ are independent copies of a continuous random variable $C$. We assume that $C$ has a continuous density $f$ and satisfies
\begin{enumerate}[(i)]
\item $f(x)=a+bx+O(x^2)$ for $0\leq x\leq L$, where $a,b$ are constants and $aL\geq 1$. 
\item $\Pr(C\geq x)\leq c_1e^{-c_2x}$ for constants $c_1,c_2>0$.
\item To avoid some pathologies, we will also assume that there is a constant $M$ such that $f(x)\leq aM$ for $x\geq 0$.
\end{enumerate}
The prime examples are the uniform $[0,1]$ distribution $U[0,1]$ ($a=1,b=0,M=1$) and the exponential mean 1 distribution $EXP(1)$ ($a=1,b=-1,c_1=c_2=M=1$). 

After replacing $C$ by $aC$, the new density at zero is 1. Thus it suffices to prove the
result for $a = 1$; the original costs are obtained by dividing the limiting constants by $a$.

First consider Theorem \ref{th1} where we assumed uniform costs. Let $F(x)=\Pr(C\leq x)=x+\frac b2x^2+O(x^3)$ and let $\wh C$ be uniformly distributed. Then we can take $C=F^{-1}(\wh C)$ and $C$ will have the correct distribution. If $\wh C=y$ then $C=x=y+O(y^2)$. Now Lemma \ref{largest} implies that w.h.p. the maximum edge cost in our solution is $O(n^{-9/10})$, we see that the optimal costs using $C,\wh C$ differ by $O(n\times n^{-9/5})=o(1)$. Thus Theorem \ref{th1} holds with more generality.

Now consider Theorems \ref{th2}, \ref{th3} where we use $EXP(1)$ for costs/weights. This time we take $\wh C=-\log(1-F(C))$ so that $\Pr(\wh C\leq x)=\Pr(-\log(1-F(C))\leq x)=\Pr(F(C)\leq1 -e^{-x})=1-e^{-x}$. Notice that $-\log(1-F(x))=x+O(x^2)$. In Theorem \ref{th2} we only use edges of cost $O(\log n/n)$ and in Theorem \ref{th3} we only use edges of weight $O(n^{-9/10})$ and so we can proceed as for Theorem \ref{th1}.

In the case of Theorem \ref{th4} we use the same $\wh C$ as for Theorems \ref{th2} and \ref{th3}. In which case the possible increases in error are $O(n\cdot n^{-9/5})$ for the Assignment Problem and $O(\log n\cdot n^{-2/3})$ for the patching of the cycles.

\end{document}